\newtheorem{theorem}[equation]{Theorem}
\newtheorem{proposition}[equation]{Proposition}
\newtheorem{lemma}[equation]{Lemma}
\newtheorem{corollary}[equation]{Corollary}
\newtheorem{definition}[equation]{Definition}
\theoremstyle{definition}
\newtheorem{auxremark}[equation]{Remark}
\newenvironment{remark}{%
  \begin{auxremark}%
  }{%
   \hfill$\diamondsuit$%
    \end{auxremark}
  }
\numberwithin{equation}{section}
\def\AArm{\fam0 \rm}%
\newdimen\AAdi%
\newbox\AAbo%
\def\AAk#1#2{\setbox\AAbo=\hbox{#2}\AAdi=\wd\AAbo\kern#1\AAdi{}}%
\newcommand{\BBone}{{\ensuremath{{\AArm 1\AAk{-.8}{I}I}}}}
\def\eqlabel#1{\def\@currentlabel{#1}}
\def\formula#1{\def\@tempa{#1}\let\@tempb\theequation\def\theequation{%
\hbox{#1}}\def\@currentlabel{(\theequation)}$$}
\def\endformula{\leqno\hbox{(\@tempa)}$$\@ignoretrue\let\theequation\@tempb}
\def\given{\hskip5\p@\relax\vrule\@width.4\p@\hskip5\p@\relax}
\newcommand{\open}[1]{%
\par\normalfont\topsep6\p@\@plus6\p@\trivlist\item[\hskip\labelsep\itshape#1%
\@addpunct{.}]\ignorespaces}
\DeclareRobustCommand{\close}[1]{%
  \ifmmode 
  \else \leavevmode\unskip\penalty9999 \hbox{}\nobreak\hfill
  \fi
  \quad\hbox{$#1$}}
\newlength{\toskip}\settowidth{\toskip}{(\theequation)}
\def\<{\langle}
\def\>{\rangle}
\def \R {{\mathbb R}}
\def \Q {{\mathbb Q}}
\def \P {{\mathbb P}}
\def \E {{\mathbb E}}
\def \N {{\mathbb N}}
\def \L {{\mathbb L}}
\def \Var {\textrm{Var}}
\def \Osc {\textrm{Osc}}
\newcommand*{\nrm}[1]{\left\|#1\right\|}
\newcommand*{\abs}[1]{\left|#1\right|}
\newcommand*{\vt}[1]{\nrm{#1}_{\text{\scshape\tiny TV}}}
\newcommand*{\dmu}{\,d\mu}
\DeclareMathOperator{\dvg}{div}
\begin{document}
\date{\today}

\title[Poincar\'e and hitting times.]{Poincar\'e inequalities and hitting times}

 \author[P. Cattiaux]{\textbf{\quad {Patrick} Cattiaux $^{\spadesuit}$ \, \, }}
\address{{\bf {Patrick} CATTIAUX},\\ Institut de Math\'ematiques de Toulouse. CNRS UMR 5219. \\
Universit\'e Paul Sabatier,
\\ 118 route
de Narbonne, F-31062 Toulouse cedex 09.} \email{cattiaux@math.univ-toulouse.fr}

\author[A. Guillin]{\textbf{\quad {Arnaud} Guillin $^{\diamondsuit}$}}
\address{{\bf {Arnaud} GUILLIN},\\ Institut Universitaire de France et Laboratoire de Math\'ematiques, CNRS UMR 6620, Universit\'e Blaise Pascal,
avenue des Landais, F-63177 Aubi\`ere.} \email{guillin@math.univ-bpclermont.fr}

\author[P. A. Zitt]{\textbf{\quad {Pierre Andr\'e} Zitt $^{\clubsuit}$}}
\address{{\bf {Pierre Andr\'e} ZITT},\\ Institut  de Math\'ematiques de Bourgogne, CNRS UMR 5584, Universit\'e de
Bourgogne, 9 avenue Alain Savary, B.P. 47870, F-21078 Dijon cedex.}
\email{Pierre-Andre.Zitt@u-bourgogne.fr}

\maketitle
 \begin{center}

 \textsc{$^{\spadesuit}$  Universit\'e de Toulouse}
\smallskip

\textsc{$^{\diamondsuit}$ Universit\'e Blaise Pascal and Institut Universitaire de France}
\smallskip

\textsc{$^{\clubsuit}$ Universit\'e de Bourgogne}
\smallskip
 \end{center}

\begin{abstract}
Equivalence of the spectral gap, exponential integrability of hitting times and Lyapunov
conditions are well known. We give here the correspondance (with quantitative results) for
reversible diffusion processes. As a consequence, we generalize results of Bobkov in the one
dimensional case on the value of the Poincar\'e constant for logconcave measures to
superlinear potentials. Finally, we study various functional inequalities under different
hitting times integrability conditions (polynomial, ...). In particular, in the one dimensional case,
ultracontractivity is equivalent to a bounded Lyapunov condition.
\end{abstract}
\bigskip

\textit{ Key words :}  Poincar\'e inequalities,  Lyapunov functions, hitting times,
log-concave measures, Poincar\'e-Sobolev inequalities.
\bigskip

\textit{ MSC 2010 : .} 26D10, 39B62, 47D07, 60G10, 60J60.
\bigskip

\maketitle

\section{Introduction.\protect\footnote{This work has benefited from the support of the Agence Nationale de la Recherche project EVOL.}}\label{Intro}

During the recent years a lot of progress 
has been made in the understanding of functional
inequalities and their links with the long time behavior of stochastic processes. Very recently,
starting with \cite{BCG}, the interplay between functional inequalities and the Lyapunov functions
used in the ``Meyn-Tweedie'' theory (\cite{DMT,MT}) has emerged (see \cite{BBCG,CGWW,CGGR} and the
recent survey \cite{CGgre}).

In the present paper we shall go a step further by showing the equivalence between the (usual)
Poincar\'e inequality, the existence of a Lyapunov function and the exponential integrability of
the hitting times of open bounded subsets.

As we shall recall below, this equivalence is well known in
the Markov chains setting, a key tool being the renewal theory. We shall discuss here the
diffusion process setting. In order to avoid technical intricacies,  we only look at
``very regular'' cases, i.e. hypoelliptic processes.

Note that the question of the existence of exponential moments for hitting times when a
Poincar\'e inequality holds was addressed in \cite{CarKlein} almost thirty years ago. We will
precise explicit values for the constants, and add Lyapunov functions to the picture.

The one
dimensional situation was recently discussed in \cite{LLS}, but as it is well known, monotonicity
arguments make things easier in the one dimensional situation.
\medskip

The main theorem is derived in Section \ref{secexit}. The proof being constructive, it allows us
to give quantitative estimates for hitting times as well as versions of the Poincar\'e inequality
where the mean is replaced by any ``local mean'' control. This is done in Section \ref{plus}. In
Section \ref{line} we look at the one dimensional setting. We show that Boltzmann-Gibbs measures
with a super-linear potential at infinity satisfy a Poincar\'e inequality and recover (up to the
universal constant) the control of the Poincar\'e constant for log-concave Probability measures
obtained by Bobkov (\cite{bob99}). In the final section \ref{secweak} we shall also discuss
polynomial moments of hitting times, instead of exponential ones, in connection with weak
Poincar\'e inequalities. This section is reminiscent of the work of Mathieu (\cite{Math}).

\section{Poincar\'e inequality and hitting times.}\label{secexit}

\subsection{The main result}
Let us first recall the known situation for Markov chains. For simplicity assume that the state
space $E$ is countable, and that $Q$ is a Markov transition kernel on $E$ which is irreducible
and aperiodic. Denote by $(X_n)_{n \in \N}$ the associated Markov chain. For $a\in E$ we denote by
$T_a$ the hitting time of $\{a\}$ i.e. $T_a=\inf\{n\geq 0 \, ; \, X_n=a\}$.
Then

\begin{theorem}\label{thmmainchain}
Under the previous assumptions, the following statements are equivalent
\begin{enumerate}
\item there exist $a\in E$ and  $\rho>1$ such that for all $x\in E$,
$\E_x\left(\rho^{T_a}\right)<+\infty \, ,$
\item there exist an invariant probability measure
$\pi$ and $0<\theta<1$ such that for all $x\in E$ one can find $C(x)$ with $$
\vt{ Q^n(x,.)  -  \pi(.)}   \leq  C(x) \, \theta^n \, ,$$ where $\vt{ \nu - \mu}$
denotes the total variation distance between $\mu$ and $\nu$,
\item there exists a Lyapunov
function,  i.e. a function $W:E\to\R$, such that $W\geq 1$,  $(Q-Id)W  :=  LW  \leq  \alpha  W + b
\BBone_a$ for some $0<\alpha<1$ and some $b\geq 0$.
\end{enumerate}
In addition if the (unique) invariant measure is symmetric, these statements are equivalent to the
following two additional ones
\begin{enumerate}[resume] 
\item there exists a constant $C_P$ such that the Poincar\'e inequality $$\Var_\pi(f) \, \leq
\, C_P \, \langle (Id - Q^2)f \, , \, f\rangle$$ holds for all $f\in l^2(\pi)$
($\langle.,.\rangle$ being the scalar product in $l^2(\pi)$),
\item there exists some
$0<\lambda<1$ such that $\Var_\pi(Q^nf) \, \leq \, \Var_\pi(f) \, \lambda^{2n}$.
\end{enumerate}
\end{theorem}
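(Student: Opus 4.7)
The plan is to establish the cycle (3) $\Rightarrow$ (1) $\Rightarrow$ (2) $\Rightarrow$ (3) for the first three statements, then handle (4) $\Leftrightarrow$ (5) by spectral theory and close the loop to (2) in the reversible case. I read the drift condition as the standard geometric drift $LW \leq -\alpha W + b \BBone_a$, so that $QW \leq (1-\alpha) W$ outside $\{a\}$.

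For (3) $\Rightarrow$ (1), the process $M_n := (1-\alpha)^{-n} W(X_n) \BBone_{\{T_a > n\}}$ is a nonnegative supermartingale under $\P_x$ for $x\neq a$. Optional stopping at $T_a\wedge n$ and Fatou yield $\E_x[(1-\alpha)^{-T_a}] \leq W(x)$, which is (1) with $\rho = (1-\alpha)^{-1}$. Conversely, for (1) $\Rightarrow$ (3) the function $W(x) := \E_x[\rho^{T_a}]$ satisfies $QW(x) = \rho^{-1} W(x)$ on $\{x\neq a\}$ by the strong Markov property, and after rescaling so that $W\geq 1$ this is exactly the required Lyapunov function. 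The implication (3) $\Rightarrow$ (2) is the classical Foster--Lyapunov / Meyn--Tweedie theorem, proved via regeneration at the atom or a coupling argument; the reverse (2) $\Rightarrow$ (1) is immediate since $Q^n(x,\{a\}) \to \pi(a) > 0$ geometrically under (2), forcing $\P_x(T_a > n) = \P_x(X_1,\dots,X_n \neq a)$ to decay geometrically and hence $T_a$ to be exponentially integrable.

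In the reversible case, (4) $\Leftrightarrow$ (5) is pure spectral theory for the self-adjoint contraction $Q$ on $l^2(\pi)$: (4) says the spectrum of $Q^2$ on the orthogonal complement of constants lies in $[0,\,1-C_P^{-1}]$, which is in turn equivalent to (5) with $\lambda^2 = 1-C_P^{-1}$. The direction (4) $\Rightarrow$ (2) follows from Cauchy--Schwarz and reversibility: $\vt{Q^n(x,\cdot) - \pi}^2 \leq \|Q^n(x,\cdot)/\pi - 1\|_{l^2(\pi)}^2$, and the right-hand side decays geometrically thanks to (5), modulo an initial step (e.g.\ one step of the chain) to ensure the density lies in $l^2(\pi)$.

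The main obstacle is closing the loop in the reversible case, i.e.\ going from (1)--(3) to (4). Passing from TV ergodicity to an $l^2$ spectral gap is not automatic, and reversibility is essential. One clean route, exploiting what we have already proved, is to use (1) and decompose the covariance $\Cov_\pi(f,g)$ over successive excursions from the atom $\{a\}$; the exponential moments of $T_a$ translate directly into an exponential decay of correlations, which by reversibility is equivalent to a Poincar\'e inequality and provides the constant $C_P$ explicitly in terms of $\rho$. Alternatively, one can invoke the $f$-norm ergodic theorem of Meyn and Tweedie together with reversibility to upgrade TV decay to $l^2$ decay of $Q^n f - \pi f$. This step is the technical heart of the theorem and is the one where the discrete structure (the atom) combined with reversibility does all the work.
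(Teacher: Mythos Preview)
Your treatment of (1) $\Leftrightarrow$ (3), (3) $\Rightarrow$ (2), and (4) $\Leftrightarrow$ (5) matches the paper's, which dismisses these as an exercise, a reference to Hairer--Mattingly/Meyn--Tweedie, and ``well known'' respectively.

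There is however a genuine gap in your argument for (2) $\Rightarrow$ (1). You call it ``immediate'' because $Q^n(x,\{a\}) \to \pi(a) > 0$ geometrically, but this only says that for each fixed $x$ there is some $N(x)$ with $\P_x(X_{N(x)} = a) \geq \pi(a)/2$, hence $\P_x(T_a > N(x)) \leq 1 - \pi(a)/2$. To iterate this via the Markov property and conclude geometric decay of $\P_x(T_a>n)$ you would need a bound on $\P_y(T_a>N)$ that is \emph{uniform in $y$}, and (2) gives no such thing since $C(y)$ may be unbounded. The bound $\P_x(T_a>n)\leq \P_x(X_n\neq a)$ is useless since the right side tends to $1-\pi(a)$, not $0$. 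The paper singles this implication out as the delicate one: the standard proof goes through Kendall's renewal theorem, writing the first-passage decomposition $U(z)=1/(1-F(z))$ for the generating functions of $n\mapsto Q^n(a,a)$ and of the return time $T_a^+$, and using an analytic-continuation argument to transfer radius of convergence $>1$ from $U$ to $F$. This is precisely why, as the paper notes, (2) $\Rightarrow$ (1) is the one implication for which no explicit constant is available.

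For closing the loop (1)--(3) $\Rightarrow$ (4) in the reversible case, the paper takes a different route from either of your suggestions. Rather than decomposing the covariance over excursions or invoking $f$-norm ergodicity, it observes that (3) for $Q$ gives (2) for $Q$, hence (2) and therefore (3) for $Q^2$, and then applies an argument of Mu-Fa Chen that extracts the Poincar\'e inequality (4) \emph{directly} from the Lyapunov condition for $Q^2$. This is the discrete counterpart of the key inequality $\int (-LW/W) f^2\, d\mu \leq \int \Gamma(f,f)\, d\mu$ that the paper proves and exploits in detail for diffusions. Your covariance-over-excursions idea can in principle be made to work, but it is a longer road and you have only sketched it; the Lyapunov-to-Poincar\'e route is both shorter and more in keeping with the paper's philosophy.
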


The equivalence between (1) and (3) is an exercise, while (3) implies (2) can be nicely shown as
remarked by M. Hairer and J.C. Mattingly (\cite{HMyet}) even in a stronger form. The converse (2)
implies (1) is more intricate, and usual proofs call upon Kendall's renewal theorem and an argument
of analytic continuation (see e.g. S.~Meyn and R.~Tweedie's monograph \cite{MT}).
In particular we can give explicit expressions for the constants
for all implications, except this one (i.e. if $(2)$ holds, we only know that $(1)$ holds for some non explicit $\rho$.)

The equivalence between (4) and (5) is well known, while (5) clearly implies (2). Finally, (3)
implies that (2) holds for $Q$ hence for $Q^2$  changing $\theta$. Hence (3) holds for $Q^2$, and
this implies that the Poincar\'e inequality (4) holds according to an argument due to Mu-Fa Chen
(\cite{chenbook} p. 221-235).
\medskip

The aim of this section is to extend this result to some continuous time diffusion processes on
$\R^d$ (or a finite dimensional Riemannian manifold). We also want to get bounds for all the
constants, as precisely as
possible. Actually, an accurate study of the literature provides (in
possibly more general situations) almost all the results we shall state. One possible way is to
use some skeleton chain and Theorem \ref{thmmainchain} (with some loss for the constants). Our
approach will be more direct and elementary.
\medskip

For simplicity we shall consider $\R^n$ valued diffusion processes $(X_t)_{t>0}$ with generator
$$L = \sum_{i,j} \, a_{ij} \, \partial^2_{ij} \, + \, \sum_i \, b \, \partial_i$$ where
$a=\sigma^* \, \sigma$, $\sigma_{ij}$ and $b_i$ being smooth enough ($C^\infty$ for instance). We
introduce the ``carr\'e du champ'' operator
$$\Gamma(f,g) = \frac 12 \, \left(L(fg) - f Lg - gLf \right) = \langle \sigma \, \nabla f \, , \, \sigma \, \nabla g\rangle \, .$$
In addition we assume that $\mu(dx)=e^{-V(x)} dx$ is a symmetric probability measure for the
process, where the potential $V$ is also assumed to be smooth. Thus $L$ generates a
$\mu$-symmetric semi-group $P_t$ and the $\L^2$ ergodic theorem (in the symmetric case) tells us
that for all $f\in \L^2(\mu)$, $$\lim_{t \to +\infty} \, \parallel P_tf \, - \, \int f \,
\dmu\parallel_{\L^2(\mu)} \, = \, 0 \, .$$ If $U$ is an open subset of $\R^d$ we define $$T_U
=\inf\{t>0 \, ; \, X_t \in U\} \, .$$

Consider the following statements:
\begin{enumerate}
\item[(H1)] There exists a Lyapunov function $W$, i.e. there exist a smooth function $W:\R^n \to \R$, s.t. $W\geq 1$, a
constant $\lambda > 0$ and an open connected bounded subset $U$ such that
\[ LW \, \leq \, - \,
\lambda \, W \quad \textrm{ on } \, \, (\bar{U})^c \, .
\]
\item[(H2)] There exist an open
connected bounded subset $U$ and a constant $\theta>0$ such that for all $x$,
\[
\E_x\left(e^{\theta \, T_U}\right) < + \infty \, ,
\]
and $x \mapsto \E_x\left(e^{\theta \, T_U}\right)$ is locally bounded.
\item[(H2$\mu$)] There exist an open connected bounded subset $U$
and a constant $\theta>0$ such that,
$$\E_\mu\left(e^{\theta \, T_U}\right) < + \infty \, .$$
\item[(H3)] There exist constants $\beta >0$ and $C>0$ and a
function $W\geq 1$ belonging to $\L^1(\mu)$ such that for all $x$
$$\vt{ P_t(x,.) - \mu } \, \leq \, C \, W(x) \, e^{- \beta \, t} \, .$$
\item[(H4)] $\mu$ satisfies a Poincar\'e inequality, i.e. there exists a constant $C_P$ such that
for all smooth $f$,
$$\Var_\mu(f) \, \leq \, C_P \, \int \, \Gamma(f,f) \dmu \, .$$
\item[(H5)] There exist
constants $\eta >0$ and $C>0$ such that for all bounded $f$, $$\Var_\mu (P_tf) \, \leq \, C \,
e^{-\eta \, t} \,  \Osc^2(f) \, ,$$ where $\Osc(f)$ denotes the oscillation of $f$.
\item[(H6)]
There exists a constant $C_S$ such that for all $f\in \L^2(\mu)$,
\[
\Var_\mu(P_tf) \, \leq \, e^{-
\, C_S \, t} \, \Var_\mu(f) \, .
\]
\end{enumerate}

Finally we also introduce the following definition
\begin{definition}\label{defhypostrong}
We shall say that $L$ is strongly hypoelliptic if it can be written in H\"{o}rmander form
$L=\sum_j \, X_j^2 + Y$ where the $X_j$'s and $Y$ are smooth vector fields such that the Lie
algebra generated by the $X_j$'s is full at each $x\in \R^n$ (i.e. spans the tangent space at each
$x$). Note that in this situation $\Gamma(f,f) = \sum_j |X_j f|^2$.

We shall say that $L$ is uniformly strongly hypoelliptic if all the $X_j$'s are bounded with
bounded derivatives (of any order) and there exist $N \in \N$, $\alpha
>0$ such that for all $\xi \in \R^n$, $$\sum_{Z\in L_N(x)} \, \langle Z(x),\xi\rangle^2 \geq
\alpha |\xi|^2$$ where $L_N(x)$ denotes the set of Lie brackets of length smaller or equal to $N$
computed at $x$.
\end{definition}

We may state now our main

\begin{theorem}\label{thmmaindiff}
The following relations hold true (recall that $\mu$ is symmetric)
\begin{enumerate}
\item (H1) $\Rightarrow$ (H3) $\Rightarrow$ (H4) $\Leftrightarrow$ (H5) $\Leftrightarrow$ (H6),
\item (H1) $\Rightarrow$ (H2) and (H2 $\mu$).
\item If $L$ is uniformly strongly hypoelliptic then (H4) $\Rightarrow$ (H2) and (H2 $\mu$), and (H2) or (H2 $\mu$) $\Rightarrow$
(H1).
\end{enumerate}
Hence if $L$ is uniformly strongly hypoelliptic all statements (H1) up to (H6) are equivalent.
\end{theorem}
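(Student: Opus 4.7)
I would organize the argument along the three bullets of the statement.

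\emph{The chain (H4)$\Leftrightarrow$(H5)$\Leftrightarrow$(H6) and (H1)$\Rightarrow$(H3)$\Rightarrow$(H4).} Symmetric spectral theory identifies $1/C_P$ with the spectral gap of $-L$ in $\L^2(\mu)$, giving (H4)$\Leftrightarrow$(H6). Since $\Var_\mu(f)\leq \tfrac14\Osc^2(f)$ for bounded $f$, (H6)$\Rightarrow$(H5) is immediate, while (H5)$\Rightarrow$(H4) is the continuous-time analogue of the M.-F.~Chen argument invoked in the discrete case at the end of Theorem~\ref{thmmainchain}. The implication (H1)$\Rightarrow$(H3) comes from the continuous-time Meyn-Tweedie/Down-Meyn-Tweedie machinery: (H1) is a drift condition forcing the process back to $\bar U$ on average, while hypoellipticity of $P_t$ yields a uniform minorization on the compact set $\bar U$ at any positive time; combined these give (H3) with weight $W$. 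For (H3)$\Rightarrow$(H4), combine the pointwise TV estimate with the $\L^2$ interpolation: for bounded $f$,
\[
\abs{P_tf(x)-\textstyle\int f\dmu}\,\leq\,\Osc(f)\,\vt{P_t(x,\cdot)-\mu}\,\leq\,C\,W(x)\,\Osc(f)\,e^{-\beta t},
\]
and $\nrm{g}^2_{\L^2(\mu)}\leq\nrm{g}_{\L^\infty(\mu)}\nrm{g}_{\L^1(\mu)}$ gives $\Var_\mu(P_tf)\leq C\,\Osc^2(f)\,\nrm{W}_{\L^1(\mu)}\,e^{-\beta t}$, which is (H5), hence (H4).

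\emph{(H1)$\Rightarrow$(H2) and (H2$\mu$).} Apply It\^o's formula to $Z_t:=e^{\lambda(t\wedge T_U)}W(X_{t\wedge T_U})$. On $(\bar U)^c$ the Lyapunov bound turns $(Z_t)$ into a nonnegative local supermartingale; after a standard localization (stopping at $|X|\leq R$ and letting $R\to\infty$) optional stopping yields $\E_x[e^{\lambda T_U}]\leq W(x)$, locally bounded by continuity of $W$ (and $T_U=0$ on $U$). This is (H2). Integrating against $\mu$ gives (H2$\mu$) as soon as $W\in\L^1(\mu)$; if the initial $W$ is not integrable, modify it outside a large ball (or truncate) in a way that preserves the Lyapunov inequality on $(\bar U)^c$ and makes the modified function $\mu$-integrable.

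\emph{Under uniform strong hypoellipticity: (H4)$\Rightarrow$(H2),(H2$\mu$) and (H2) (or (H2$\mu$))$\Rightarrow$(H1).} Set $N_t(x):=\P_x(T_U>t)$, viewed as the Dirichlet-killed semigroup on $U^c$ (extended by $0$ on $U$). Differentiating and integrating by parts give $\tfrac{d}{dt}\int N_t^2\dmu=-2\int\Gamma(N_t,N_t)\dmu$. Since $N_t=0$ on $U$, Cauchy-Schwarz gives $(\int N_t\dmu)^2\leq\mu(U^c)\int N_t^2\dmu$, and the Poincar\'e inequality then produces
\[
\frac{d}{dt}\int N_t^2\dmu\,\leq\,-\,\frac{2\mu(U)}{C_P}\,\int N_t^2\dmu,
\]
hence $\P_\mu(T_U>t)\leq\sqrt{\mu(U^c)}\,e^{-\mu(U)t/C_P}$, i.e.\ (H2$\mu$) for every $\theta<\mu(U)/C_P$. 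The pointwise bound (H2) then follows from the Markov property at a small time $t_0$ together with the smooth bounded heat kernel supplied by uniform hypoellipticity. For (H2) (or (H2$\mu$))$\Rightarrow$(H1), set $W(x):=\E_x[e^{\theta T_U}]$: uniform strong hypoellipticity makes $W$ smooth, and Dynkin's formula gives $LW=-\theta W$ on $(\bar U)^c$, so $W\geq 1$ is a Lyapunov function with $\lambda=\theta$.

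\emph{Main obstacle.} The genuinely delicate step is (H4)$\Rightarrow$(H2) in Part~3. Poincar\'e is an $\L^2$-integrated statement and naturally produces only the integrated (H2$\mu$); upgrading it to a \emph{locally bounded} pointwise estimate $x\mapsto\E_x[e^{\theta T_U}]$, and the parallel smoothness of this function needed in (H2)$\Rightarrow$(H1), really relies on uniform hypoelliptic smoothing. This is exactly where the extra regularity assumption of Definition~\ref{defhypostrong} is consumed; the other implications amount to It\^o's formula, interpolation, and classical symmetric spectral theory.
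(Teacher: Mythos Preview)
Your plan is essentially sound and covers all the implications, but it diverges from the paper in two places and has one soft spot.

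\textbf{(H5)$\Rightarrow$(H6).} Your reference to ``the M.-F.~Chen argument'' is a misattribution: in Theorem~\ref{thmmainchain} Chen's argument is used for (Lyapunov)$\Rightarrow$(Poincar\'e), not for (oscillation decay)$\Rightarrow$(spectral gap). The paper instead gives a short self-contained proof: $t\mapsto\log\|P_tf\|_{\L^2(\mu)}$ is convex (Cauchy--Schwarz on $n''n-(n')^2$), and a convex function bounded on $\R^+$ must be non-increasing, so $\Var_\mu(P_tf)\le c_f e^{-\eta t}$ for $f$ in a dense class forces $\Var_\mu(P_tf)\le e^{-\eta t}\Var_\mu(f)$. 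This is more elementary than spectral resolution (the R\"ockner--Wang route) and worth knowing.

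\textbf{(H4)$\Rightarrow$(H2$\mu$).} Here your approach is genuinely different. The paper uses the inclusion $\{T_U>t\}\subset\{\frac1t\int_0^t\BBone_U(X_s)\,ds=0\}$ together with a concentration/deviation bound for additive functionals (from \cite{CatGui2}) to get $\P_\mu(T_U>t)\le\exp(-t\mu(U)/8C_P(1-\mu(U)))$. Your argument --- decay of $\int N_t^2\,d\mu$ via the Dirichlet semigroup and Poincar\'e applied to $N_t$ extended by $0$ on $U$ --- is more direct and even yields the sharper rate $\mu(U)/C_P$. It is correct, but you should justify that the zero-extension of $N_t$ lies in the domain of the Dirichlet form (it does, since $N_t$ vanishes on $\partial U$ and the weak gradient has no singular part there); without this the application of (H4) to $N_t$ is formal.

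\textbf{(H1)$\Rightarrow$(H2$\mu$).} Your suggestion to ``modify or truncate'' $W$ if it is not integrable is the soft spot: ad hoc truncation can destroy the Lyapunov inequality on $(\bar U)^c$. The paper handles this by proving in Remark~\ref{reminteg} that any $W$ satisfying (H1) is \emph{automatically} in $\L^1(\mu)$: compose $W$ with a smooth concave cutoff $\psi$, use $\int L(\psi(W))\,d\mu=0$ and the chain rule to bound $\lambda\int W\BBone_{W\le R}\,d\mu$ uniformly in $R$. This closes the gap cleanly.

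Finally, the paper also gives a direct argument for (H1)$\Rightarrow$(H4) via the inequality $\int(-LW/W)f^2\,d\mu\le\int\Gamma(f,f)\,d\mu$ and a cutoff $\chi$, producing the explicit bound \eqref{eqpoinclyap}; your route through (H3) is valid but, as the paper notes, the Meyn--Tweedie step gives essentially non-explicit constants.
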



Let us make a few remarks on the hypotheses.

\begin{remark}[Hypo-ellipticity] \quad

  In particular, the diffusion with a gradient drift $L = \Delta - \nabla V \cdot \nabla$
  is of course hypo-elliptic. We will see later precise computations for the constants in this case, under
  the additional assumption:
  \begin{equation}
     LV + \frac{1}{2} \Gamma(V,V) \leq C_m < \infty.
    \label{eq=hypotheseCm}
  \end{equation}
\end{remark}

\begin{remark}[Symmetry]\label{remnonsym}
Actually several implications are still true without the symmetry assumption. However symmetry is
required for (H5) $\Rightarrow$ (H6) (counter-examples are known in the non-symmetric situation,
see e.g. \cite{BCG} section 6 with kinetic Fokker-Planck equations). It is also required for our proof of (H1) $\Rightarrow$ (H4), but
it is not for the one in \cite{DMT,DFG}. \\
Symmetry is used in the proof of (H4)
$\Rightarrow$ (H2), but it is not required for the first partial result i.e. the existence of the
exponential moment for $\mu$ almost all $x$ (which holds in much more general cases according to
the framework of \cite{CatGui2}). This result appears in the paper by Carmona and Klein
(\cite{CarKlein}) where the exponential integrability of hitting times is shown under exponential
rate of convergence in the ergodic theorem (hence Poincar\'e) and we are able to give a precise
bound for the exponent (answering the question in Remark 2 of \cite{CarKlein}).\\
Note also that the implications (H1) to (H5) holds also, with additional assumptions
(local Poincar\'e inequality and (slight) conditions on the constants involved in (H1)) using Lyapunov-Poincar\'e inequalities as in \cite{BCG}.\\
Let us finally remark that Rockner-Wang \cite{rw} proves (H5) to (H6) without symmetry but assuming that $L$ is normal (i.e. $LL^*=L^*L$).
\end{remark}

\begin{remark}\label{remcompare}
Of course, provided $W$ is everywhere defined and smooth, (H1) can be rewritten: there exists a
Lyapunov function $W$, i.e. there exist a smooth function $W\geq 1$, a constant $\lambda > 0$ and
an open connected bounded subset $U$ such that $$LW \, \leq \, - \, \lambda \, W  + b \,
\BBone_{U} \, ,$$ with $b = \sup_U \left(LW + \lambda W\right)$. This formulation is the one used
in \cite{BBCG} yielding another bound for the Poincar\'e constant, namely
\begin{equation}\label{eqpoincbbcg}
C_P \leq \frac 1 \lambda \, (1 + b C_P(U)) \, .
\end{equation}
The bound we will get below (eq. \eqref{eqpoinclyap}) is not immediately comparable with this one.

In particular if
(H2) holds in our strong hypoelliptic framework, $x \mapsto \E_x(e^{\theta T_U})$ is smooth
(provided the boundary $\partial U$ is non characteristic) on $\overline{U^c}$ (see again
\cite{cathypo2}) hence can be smoothly extended to the whole $R^n$ according to Seeley's theorem.
But an explicit bound for $b$ is difficult to obtain.
\end{remark}

\subsection{Proof of the main theorem}
Let us begin by a small remark on (H1).
\begin{remark}[Integrability of $W$]\label{reminteg}
We did not impose any integrability condition for $W$ in (H1). Actually if $W$ satisfies (H1), $W$
automatically belongs to $\L^1(\mu)$.

Indeed choose some smooth, non-decreasing, concave function $\psi$ defined on $\R^+$, satisfying
$\psi(u)=u$ if $u\leq R$, $\psi(u)= R+1$ if $u\geq R+2$ and with $\psi'(u) \leq 1$ (such a
function exists). Then $\psi(W)$ is smooth and bounded. According to the chain rule
\begin{equation}
  \label{eqchainrule}
L(\psi(W)) = \psi'(W) \, LW + \psi''(W) \, \Gamma(W,W) \, \leq \, - \lambda \, \psi'(W) \, W \quad \textrm{ on
} \bar{U}^c \, ,
\end{equation}
thanks to our assumptions. For $R$ large enough, $W \leq R$ on $U$, so that
$\psi(W)=W$ on $U$. It follows

\begin{align*}
\lambda \, \int  W \, \BBone_{W\leq R} \dmu
& \leq  \lambda \, \int  \psi'(W) \, W \dmu\\
&  =   \int  L(\psi(W)) \dmu + \lambda \, \int \, \psi'(W) \, W \dmu & &\text{since ($\int Lg \dmu = 0$)}\\
& \leq \int_U  \left(L(\psi(W))  + \lambda \,  \psi'(W) \, W \right) \dmu & &\text{using \eqref{eqchainrule}} \\
& \leq \int_U  \left(LW + \lambda \, W\right) \dmu \, = \, C(U) \, ,
\end{align*}
where $C(U)$ does not depend on $R$. We conclude by letting $R$ go to $\infty$.
\end{remark}

We now turn to the proof of the theorem.

\textit{(H4) $\Leftrightarrow$ (H6).} This is well known and we have in addition $C_S=2/C_P$.
\smallskip

\textit{(H6) $\Leftrightarrow$ (H5).} (H6) clearly implies (H5). Since $\mu$ is symmetric the
converse is proven in \cite{rw} using the spectral resolution. For the sake of completeness we
shall give below a very elementary proof of this fact based on the following
\begin{lemma}\label{lemlogsemi}
$t \mapsto \log \parallel P_t f\parallel_{\L^2(\mu)}$ is convex.
\end{lemma}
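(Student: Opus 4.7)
The plan is to reduce the claim to a Cauchy--Schwarz estimate combined with the symmetry and the semigroup property.

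Set $\phi(t) = \|P_t f\|_{\L^2(\mu)}^2$, so that $\log\|P_tf\|_{\L^2(\mu)} = \tfrac{1}{2}\log\phi(t)$. It therefore suffices to prove that $\log \phi$ is convex on $(0,\infty)$. Since $t\mapsto \phi(t)$ is smooth and continuous up to $t=0$ (by the strong continuity of $P_t$ on $\L^2(\mu)$), it is enough to establish midpoint convexity, i.e.\ that
\[
\phi\!\left(\tfrac{s+u}{2}\right)^{2} \leq \phi(s)\,\phi(u) \qquad \text{for all } 0\leq s<u.
\]

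The key identity is obtained by combining symmetry of $P_t$ with the semigroup property. For any $t\geq 0$,
\[
\phi(t) = \langle P_tf,P_tf\rangle_{\L^2(\mu)} = \langle f,P_{2t}f\rangle_{\L^2(\mu)},
\]
and, more generally, splitting $2t = s+u$ with $t = (s+u)/2$,
\[
\phi\!\left(\tfrac{s+u}{2}\right) = \langle f, P_{s+u}f\rangle_{\L^2(\mu)} = \langle P_sf,P_uf\rangle_{\L^2(\mu)}.
\]
Now Cauchy--Schwarz in $\L^2(\mu)$ gives
\[
\phi\!\left(\tfrac{s+u}{2}\right) \leq \|P_sf\|_{\L^2(\mu)}\,\|P_uf\|_{\L^2(\mu)} = \phi(s)^{1/2}\phi(u)^{1/2},
\]
which is the desired midpoint log-convexity. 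Taking logarithms and using continuity of $\phi$ yields the convexity of $t\mapsto \log\phi(t)$, hence of $t\mapsto \log\|P_tf\|_{\L^2(\mu)}$.

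There is no real obstacle here: the only input beyond elementary inequalities is the symmetry identity $\langle P_sf, P_uf\rangle = \langle f, P_{s+u}f\rangle$, which is exactly what allows Cauchy--Schwarz to produce the product of norms at times $s$ and $u$. This is precisely why the statement (and the subsequent implication (H5) $\Rightarrow$ (H6)) requires $\mu$ to be reversible: without symmetry, one could not split $P_{s+u}$ as an inner product of $P_sf$ and $P_uf$.
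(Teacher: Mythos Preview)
Your proof is correct, but it differs from the paper's. The paper differentiates: setting $n(t)=\|P_tf\|_{\L^2(\mu)}^2$, it computes $n'(t)=2\int P_tf\,LP_tf\,d\mu$ and (using symmetry) $n''(t)=4\int (LP_tf)^2\,d\mu$, then obtains $n''n\geq (n')^2$ by applying Cauchy--Schwarz to the pair $(P_tf,LP_tf)$. You instead bypass differentiation entirely by exploiting the semigroup identity $\phi((s+u)/2)=\langle P_sf,P_uf\rangle$ and applying Cauchy--Schwarz to the pair $(P_sf,P_uf)$, obtaining midpoint log-convexity directly. Your route is more elementary and slightly more robust: it requires only strong continuity and symmetry of the semigroup, with no need for $P_tf$ to lie in the domain of $L$ or for $t\mapsto n(t)$ to be twice differentiable. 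The paper's route, on the other hand, makes the role of the generator explicit and yields the differential inequality $(\log n)''\geq 0$ pointwise, which can be useful if one wants quantitative information. Both arguments hinge on symmetry in the same essential way.
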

Indeed if $n(t)=\parallel P_t f\parallel^2_{\L^2(\mu)}$, the sign of the second derivative of
$\log n$ is the one of $n'' n - (n')^2$. But $$n'(t)= 2 \, \int \, P_tf \, LP_t f \dmu$$ and
$$n''(t) = 2 \, \int \, (LP_tf)^2 \dmu + 2 \, \int \, P_tf \, LP_t Lf \dmu = 4 \, \int \,
(LP_tf)^2 \dmu \, ,$$ so that the lemma is just a consequence of Cauchy-Schwarz inequality.

This convexity is a key argument in the proof of the following
\begin{lemma}\label{lemdecayf}
  Let $\mathcal{C}$ be a dense subset of $\L^2(\mu)$. Suppose that there exists $\beta>0$, and, for any $f \in \mathcal{C}$, a constant
  $c_f$ such that:
  \[\forall t, \quad  \Var_\mu(P_t f) \leq c_f e^{-\beta t}.\]
  Then
  \[ \forall f \in \L^2(\mu), \forall t, \quad \Var_\mu (P_t f) \leq e^{-\beta t}\Var_\mu(f).\]
\end{lemma}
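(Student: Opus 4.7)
The plan is to leverage Lemma \ref{lemlogsemi} on centered functions to obtain the desired exponential decay from the mere existence of \emph{some} exponential rate on a dense subset. For $f\in \L^2(\mu)$, set $\bar f = f - \int f\dmu$; since constants are invariant under $P_t$, we have $P_t \bar f = P_t f - \int f\dmu$ and hence
\[\|P_t \bar f\|_{\L^2(\mu)}^2 = \Var_\mu(P_t f).\]
Lemma \ref{lemlogsemi} applied to $\bar f$ then tells us that the map $t\mapsto \log \Var_\mu(P_t f)$ is convex (as twice the logarithm of an $\L^2$ norm along the semigroup).

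The key elementary observation is now the following: a convex function $\varphi : [0,+\infty) \to \R\cup\{-\infty\}$ that is bounded above must be non-increasing. Indeed, if $\varphi(s)<\varphi(t)$ for some $s<t$, convexity forces $\varphi(u) \geq \varphi(s) + \frac{u-s}{t-s}(\varphi(t)-\varphi(s)) \to +\infty$ as $u\to +\infty$, a contradiction.

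I would apply this to $\varphi(t) := \log\Var_\mu(P_t f) + \beta t$ for $f\in \mathcal{C}$. This function is convex (sum of a convex function and a linear one) and, by assumption, bounded above by $\log c_f$ on $[0,+\infty)$. Therefore $\varphi$ is non-increasing, and in particular $\varphi(t) \leq \varphi(0)$, which rearranges to
\[ \Var_\mu(P_t f) \leq \Var_\mu(f)\, e^{-\beta t} \qquad \text{for all } f\in \mathcal{C}, \ t\geq 0.\]
This is the announced bound, but only for $f$ in the dense subset.

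To conclude, I would extend this to all $f\in \L^2(\mu)$ by density, using that $P_t$ is a contraction on $\L^2(\mu)$ (by $\mu$-symmetry and Markovianity) and that $f\mapsto \Var_\mu(f)$ is continuous on $\L^2(\mu)$. Given $f\in \L^2(\mu)$, pick $f_n \in \mathcal{C}$ with $f_n \to f$ in $\L^2(\mu)$; then $P_t f_n \to P_t f$ in $\L^2(\mu)$, so $\Var_\mu(P_t f_n) \to \Var_\mu(P_t f)$ and $\Var_\mu(f_n) \to \Var_\mu(f)$, and passing to the limit in the inequality above finishes the proof. The only delicate point worth double-checking is the dichotomy from the convexity argument—one needs to accommodate the possibility that $\Var_\mu(P_t f)=0$ for some $t$ (i.e. $\varphi(t)=-\infty$), but in that case the desired inequality is trivially preserved for all later times since $\Var_\mu(P_s P_t f)\le \Var_\mu(P_tf)=0$ by contractivity, so the argument goes through without issue.
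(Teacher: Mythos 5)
Your proof is correct and follows essentially the same route as the paper: center $f$, invoke Lemma \ref{lemlogsemi} to get convexity of the logarithm, add the linear term $\beta t$, use the fact that a convex function bounded above on $[0,\infty)$ is non-increasing, and conclude by density. The only cosmetic difference is that the paper works with $t\mapsto\log\|P_t f\|_{\L^2(\mu)}+\beta t/2$ while you work with its double, and you spell out the $\varphi(t)=-\infty$ possibility and the $\L^2$-contractivity underpinning the density step, both of which the paper leaves implicit.
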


Our claim (H5) implies (H6) immediately follows with $\eta = C_S$. In order to prove lemma
\ref{lemdecayf}, assuming that $\int f \dmu=0$ which is not a restriction, it is enough to look
at
$$t \mapsto \log
\parallel P_t f\parallel_{\L^2(\mu)} + (\beta t/2) \, ,$$ which is convex, according to lemma
\eqref{lemlogsemi}, and bounded since $\Var_\mu (P_t f) \leq c_f \, e^{-\beta t}$. But a bounded
convex function on $\R^+$ is necessarily non-increasing. Hence $$\parallel P_t
f\parallel_{\L^2(\mu)} \leq e^{-\beta t/2} \, \parallel P_0 f\parallel_{\L^2(\mu)}$$ for all $f\in
\mathcal C$, the result follows using the density of $\mathcal C$.
\medskip

\textit{(H3) $\Rightarrow$ (H5).} This is shown in \cite{BCG} Theorem 2.1 and we may choose the
constant $C$ in (H5) equal to $8 C \int W \dmu$ where $C$ is the constant in (H3), and
$\eta=\beta$.
\smallskip

\textit{(H1) $\Rightarrow$ (H3).} This is the key result in \cite{DMT} (also see \cite{DFG}),
  unfortunately with an essentially non explicit control of the constants.

Combining all these results we get the first statement of the theorem, in particular we already
know that (H1) implies (H4).

A direct and short proof of (H1) $\Rightarrow$ (H4) is given in \cite{BBCG} for $L=\Delta - \nabla
V.\nabla$ which is the natural symmetric operator associated with $\mu$. Let us give a
slightly modified proof, yielding a better control on the constants and extending it to more
general operators.

The key is the following ($f$ being smooth)
\begin{equation}\label{eqipp}
\int \, \frac{-LW}{W} \, f^2 \dmu \, \leq \, \int \, \Gamma(f,f) \dmu \, ,
\end{equation}
which is a consequence of
\begin{align*}
\int \, \frac{-LW}{W} \, f^2 \dmu & =  \int \, \Gamma\left(\frac{f^2}{W},W\right) \dmu \\
& =  2 \, \int \, \frac fW \, \Gamma(f,W) \dmu \, - \, \int \, \frac{f^2}{W^2} \, \Gamma(W,W) \dmu \\
& =  - \, \int \abs{ \frac fW \, \sigma \nabla W - \sigma \nabla f}^2 \dmu \, + \, \int \,
\Gamma(f,f) \dmu \, .
\end{align*}
Next for $r>0$ introduce $U_r=\{x \, ; \, d(x,U)<r \}$ for the (euclidean or riemannian) distance
$d$. Let $0 \leq \chi \leq 1$ be a $C^\infty$ function such that $\chi=1$ on $U$ and $\chi=0$ on
$U_r^c$. Then
\begin{align*}
\int \, f^2 \dmu & =  \int ( f(1-\chi)+ f \chi)^2 \dmu \\
& \leq  2  \int  f^2
(1-\chi)^2 \dmu + 2 \int f^2  \chi^2 \dmu \\
 & \leq  \frac 2\lambda \, \int \frac{-LW}{W} \,
f^2 \, (1-\chi)^2 \dmu + 2 \, \int_{U_r} \, f^2 \dmu \\
 & \leq  \frac 2\lambda \, \int
\Gamma(f(1-\chi),f(1-\chi)) \dmu + 2 \, \int_{U_r} \, f^2 \dmu
\end{align*}
by \eqref{eqipp}. Since $\Gamma(fg,fg) \leq 2(f^2 \,\Gamma(g,g) + g^2 \,\Gamma(f,f))$, we get:
\begin{align*}
\int \, f^2 \dmu
 & \leq  \frac 4\lambda \,
\int \Gamma(f,f) \dmu + \frac 4\lambda \, \int f^2 \, \Gamma(\chi,\chi) \dmu + 2 \,
\int_{U_r} \, f^2 \dmu \\
 & \leq  \frac 4\lambda \, \int \Gamma(f,f) \dmu + \left(\frac {4
 \nrm{\Gamma(\chi,\chi)}_\infty}{\lambda} + 2\right) \, \int_{U_r} \, f^2 \dmu \, .
\end{align*}
Now, if $\mu$ satisfies a Poincar\'e inequality in restriction to $U_r$, i.e. $$\int_{U_r} \, f^2
\dmu \, \leq \, C_P(U,r) \, \int_{U_r} \Gamma(f,f) \dmu \quad \textrm{ if } \quad \int_{U_r}
\, f \dmu = 0 \, ,$$ we may apply the previous inequality with $g=f -\int_{U_r} f \dmu$,
yielding, since $\sigma \nabla f =\sigma \nabla g$,
\begin{equation}\label{eqpoinclyap}
\Var_\mu(f) \, \leq \, \int g^2 \dmu \, \leq \, \left(\frac 4\lambda + \left(\frac {4 \nrm{
\Gamma(\chi,\chi)}_\infty}{\lambda} + 2\right) C_P(U,r)\right) \, \int \, \Gamma(f,f) \dmu \, ,
\end{equation}
i.e. the Poincar\'e inequality (H4). Note that we may always replace $U$ by a larger euclidean
ball, i.e. we may assume that $U$ is an euclidean ball. According to the discussion in \cite{BCG}
p.744-745, if $L$ is strongly hypoelliptic, $\mu$ satisfies the Poincar\'e inequality in
restriction to any euclidean ball, so that we have shown that (H1) $\Rightarrow$ (H4) in this
case.
\medskip

We now turn to the part of the results involving the stochastic process.

\textit{(H1) $\Rightarrow$ (H2).} This is a simple application of Ito's formula applied to $(t,x)
\mapsto e^{at} \, W(x)$ (notice that (H1) implies that the diffusion process is non-explosive or
conservative). Indeed let $x\in U^c$, and $a \leq \lambda$. Define $T_{UR}$ as the first hitting
time of $U \cup \{|y|>R\}$. For $R>|x|$ we thus have
\begin{eqnarray*}
\E_x\left(e^{a(t\wedge T_{UR})}\right) & \leq & \E_x\left(e^{a(t\wedge T_{UR})} \, W(X_{t\wedge
T_{UR}})\right) \\ & \leq & W(x) + \E_x\left(\int_0^{t\wedge T_{UR}} \, (aW + LW)(X_s) \, e^{as}
\, ds \right)\\ & \leq & W(x) + \E_x\left(\int_0^{t\wedge T_{UR}} \, (a - \lambda) W(X_s) \,
e^{as} \, ds \right)\\ & \leq & W(x) \, ,
\end{eqnarray*}
so that letting first $R$ then $t$ go to infinity we obtain (H2) for $\theta = \lambda$, thanks to
Lebesgue's monotone convergence theorem.

The same proof shows that (H2 $\mu$) holds since we know that $W \in \L^1(\mu)$.
\smallskip

Conversely, assume (H2) and the strong hypoellipticity of $L$. Again we may assume that $U$ is
an euclidean ball so that for any $R>0$, the boundary of the euclidean shell $U_R - U$ is
non-characteristic for $L$. We may then use the results in e.g. \cite{cathypo2} Theorem 5.14
(local boundedness in (H2) ensures that hypothesis (HC) in \cite{cathypo2} is satisfied),
showing that
$$x \mapsto W_R(x) = \E_x\left(e^{\theta (T_U\wedge T_{U_R^c})}\right)$$ is smooth and solves the
Dirichlet problem $$LW_R + \theta \, W_R = 0 \quad \textrm{ in } \, U_R - U \quad , \quad
W_R=1 \quad \textrm{ on } \partial(U_R -U) \, .$$ Using (H2) again it then follows that $$x
\mapsto W(x) = \E_x\left(e^{\theta \, T_U}\right)$$ is well defined, solves the Dirichlet
problem with $R=+\infty$ in the sense of Schwartz distributions, hence is smooth thanks to
hypoellipticity. $W$ is then a Lyapunov function in (H1). If (H2 $\mu$) is satisfied, then an
 argument below will show that (H2) is satisfied.
\medskip

To conclude the proof of the theorem it remains to show that the Poincar\'e inequality (H4)
implies (H2). Let $U$ be an open bounded set. The idea is that, if $T_U$ is large, the process
stays for a long time in $U^c$, and spends no time at all in $U$. However, the ergodic
properties given by the Poincar\'e inequality tell us that, for large times, the fraction of
the time spent in $U$ should be proportional to $\mu(U)$; therefore $T_U$ cannot be too large.

To be more precise,
\begin{equation}\label{eqretour a ergod}
\left\{T_U > t \right\} \, \subseteq \left\{ \frac 1t \, \int_0^t \, \BBone_U(X_s) \, ds = 0\right\}  .
\end{equation}
Hence
\begin{align}
  \notag
\P_\nu(T_U>t) & \leq  \P_\nu \left(\frac 1t \, \int_0^t \, \BBone_U(X_s) \, ds = 0\right) \\
\notag
& \leq  \P_\nu \left(%
    -  \frac 1t \, \int_0^t \, \BBone_U(X_s) \, ds + \mu(U) \geq \mu(U)
  \right)\\
\label{eqQueueDeT}
& \leq  \nrm{ \frac{d\nu}{d\mu}}_{\L^2(\mu)} \cdot \exp\left( - \frac{t \,\mu(U)}{8C_P \, (1-\mu(U))} \right) ,
\end{align}
provided $\mu(U)\leq 1/2$. The latter is a consequence of Proposition 1.4 and Remark 1.6 in
\cite{CatGui2}.

From there, we get exponential moments, using the elementary lemma:
\begin{lemma}
  For any positive random variable,
  \[ \E[e^{\theta T}] = 1 + \int_0^\infty \theta e^{\theta t} \P[T>t] dt.\]
  If for some $s_0, \theta_U$ and for $t>s_0$, $\P[T>t] \leq C\exp( - (t-s_0)\theta_U)$,
  then
  \[\forall \theta < \theta_U, \quad
  \E[e^{\theta T} ] \leq e^{\theta s_0} \left( 1 + C \frac{\theta}{\theta_U - \theta} \right).\]
  \label{lmm=lemmeTrivial}
\end{lemma}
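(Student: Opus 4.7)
The plan is to prove the two assertions in sequence, both via elementary manipulations.

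First I would establish the integral identity. Writing $e^{\theta T} = 1 + \int_0^T \theta e^{\theta t}\, dt = 1 + \int_0^\infty \theta e^{\theta t} \BBone_{\{t < T\}}\, dt$ and taking expectations, Fubini's theorem (which applies since the integrand is nonnegative) gives
\[
\E[e^{\theta T}] = 1 + \int_0^\infty \theta e^{\theta t}\,\P[T > t]\, dt.
\]

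For the quantitative bound, I would split the integral at the threshold $s_0$. On $[0,s_0]$, the trivial bound $\P[T>t] \leq 1$ yields $\int_0^{s_0} \theta e^{\theta t}\, dt = e^{\theta s_0} - 1$. On $[s_0,\infty)$, plugging in the hypothesis $\P[T>t] \leq C e^{-(t-s_0)\theta_U}$ gives
\[
\int_{s_0}^\infty \theta e^{\theta t}\, \P[T > t]\, dt \;\leq\; C\theta\, e^{s_0 \theta_U}\int_{s_0}^\infty e^{(\theta - \theta_U)t}\, dt \;=\; \frac{C\theta\, e^{\theta s_0}}{\theta_U - \theta},
\]
where convergence of the integral uses $\theta < \theta_U$. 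Adding the two contributions and the initial $1$ collapses to
\[
\E[e^{\theta T}] \;\leq\; e^{\theta s_0} + \frac{C\theta\, e^{\theta s_0}}{\theta_U - \theta} \;=\; e^{\theta s_0}\left(1 + \frac{C\theta}{\theta_U - \theta}\right),
\]
which is the claimed bound.

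There is no real obstacle here; the lemma is essentially bookkeeping around the tail-integral representation of exponential moments. The only minor point worth noting is that one should not try to optimize further: the $e^{\theta s_0}$ factor coming from the initial $[0,s_0]$ window cannot be avoided without sharper information on $\P[T>t]$ for small $t$, so the stated form is already as tight as the hypotheses allow.
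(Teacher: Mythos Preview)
Your proof is correct and is exactly the standard argument the authors have in mind; the paper in fact omits the proof entirely, treating the lemma as elementary (note its internal label \texttt{lemmeTrivial}). There is nothing to add or compare.
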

For $s_0 = 0$, $\theta_U = \mu(U)/8 C_P$, and $\nu = \mu$,  using \eqref{eqQueueDeT} and this lemma,
we get $\E_\mu(e^{\theta T_U})<+\infty$, for any $\theta < \theta_U$. This entails that
$\E_x[e^{\theta T_U}]$ is itself finite, for $\mu$-almost any $x$.

If we assume the uniform strong hypoellipticity the marginal law at time $t$ of $\P_x$ has an
everywhere positive smooth density $r(t,x,.)$ w.r.t.  $\mu$, and symmetry combined with the
Chapman-Kolmogorov relation yield
\[  \int r^2(t,x,y) \, \mu(dy) =  r(2t,x,x) <\infty,\]
showing
that the $\P_x$ law of $X_1$ has a density $r(1,x,.)\in \L^2(\mu)$. We may thus apply the previous
result with $\nu= r(1,x,.) \mu$.

Notice that this argument also shows that $T_U$ has an exponential moment of order $\theta/2$ for
$\P_x$ as soon as it has an exponential moment of order $\theta$ for $\P_\mu$, i.e. (H2 $\mu$)
implies (H2).

\begin{remark}\label{remexplicitbound}
  The proof shows that $H4$ implies $H2$, i.e. the hitting times have finite exponential moments,
  but do not give explicit bounds on the value of these moments (depending on $x$). Such explicit
  bounds will be given in the next section.
\end{remark}

\section{Some consequences.}\label{plus}

We rephrase here the implication $H4 \implies H2$ of the main theorem, and add explicit
computations of the constants, and the dependence on $x$ of the moments,  in special cases.
\begin{proposition}\label{propentry}
Assume that the Poincar\'e inequality holds with constant $C_P$.

Then for all open set $U$ with $\mu(U) \leq 1/2$, $\E_x(e^{\theta T_U}) < +\infty$ for $$\theta <
\mu(U)/8 C_P \, (1 - \mu(U)) := \theta(U) \, .$$ If $\mu(U) \geq 1/2$ we may take
$\theta(U)=\mu^2(U)/2 \, C_P$.

If the boundedness assumption \eqref{eq=hypotheseCm} holds, there exists $C$ such that:
\begin{equation}
  \label{eq=boundOnTheMoment}
  \forall x,\forall \theta<\theta_U, \quad \E_x [e^{\theta T_U}] \leq C\left( 1 + e^{V(x)/2} \frac{\theta}{\theta_U - \theta}\right).
\end{equation}
If, in addition, we are in the elliptic case $L = \Delta - \nabla V \cdot \nabla$, \eqref{eq=boundOnTheMoment} holds
with $C$ replaced by $e^{\theta s_0}$, where $s_0 = \frac{1}{2\pi} e^{2C_m/n}$.
\end{proposition}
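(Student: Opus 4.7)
The plan is to follow the strategy of the proof of $(H4) \Rightarrow (H2)$ in Theorem 2.3, but to track the $x$-dependence carefully by injecting one step of the semigroup and then estimating the resulting $L^2(\mu)$-norm of a heat kernel density.

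\textbf{Step 1 (the values of $\theta(U)$).} The rate $\theta(U) = \mu(U)/\bigl(8 C_P (1-\mu(U))\bigr)$ for $\mu(U) \leq 1/2$ is exactly the exponent obtained in the proof of the main theorem, at \eqref{eqQueueDeT}, through the Cattiaux--Guillin concentration inequality (Proposition 1.4 and Remark 1.6 of \cite{CatGui2}) applied to the centered function $\mathbf{1}_U - \mu(U)$. For $\mu(U) \geq 1/2$, I would rerun the same argument but apply the concentration inequality to $\mathbf{1}_{U^c} - \mu(U^c)$, whose $L^\infty$-norm is now $\mu(U)$ rather than $1-\mu(U)$, and require a deviation of size $\mu(U^c) = 1 - \mu(U)$. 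After optimization this yields the rate $\mu^2(U)/(2C_P)$ announced in the statement. Lemma \ref{lmm=lemmeTrivial} then turns the exponential tail bound into finite exponential moments for all $\theta < \theta(U)$.

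\textbf{Step 2 (Markov step and reduction to an $L^2(\mu)$ initial density).} The key observation is the sub-additivity $T_U \leq s_0 + T_U \circ \Theta_{s_0}$, where $\Theta_{s_0}$ is the time shift. Combined with the Markov property, this gives, for any $s_0 > 0$,
\[
\E_x\bigl[e^{\theta T_U}\bigr]\;\leq\; e^{\theta s_0}\,\E_x\bigl[\E_{X_{s_0}}[e^{\theta T_U}]\bigr] \;=\; e^{\theta s_0}\,\E_{\nu_x}\bigl[e^{\theta T_U}\bigr],
\]
where $\nu_x = \P_x \circ X_{s_0}^{-1}$ has density $r(s_0,x,\cdot)$ w.r.t. $\mu$. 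The tail estimate from Step~1 upgrades (via Cauchy--Schwarz, as in \eqref{eqQueueDeT}) to $\P_{\nu_x}(T_U > t) \leq \nrm{r(s_0,x,\cdot)}_{\L^2(\mu)} e^{-t\theta_U}$, and by symmetry of $r$ together with the Chapman--Kolmogorov identity $\nrm{r(s_0,x,\cdot)}_{\L^2(\mu)}^2 = r(2s_0,x,x)$. Feeding this into Lemma \ref{lmm=lemmeTrivial} yields
\[
\E_x\bigl[e^{\theta T_U}\bigr] \;\leq\; e^{\theta s_0}\left(1 + r(2s_0,x,x)^{1/2}\,\frac{\theta}{\theta_U-\theta}\right).
\]

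\textbf{Step 3 (heat kernel bound under \eqref{eq=hypotheseCm}).} It remains to show $r(2s_0,x,x) \leq C\,e^{V(x)}$ for the chosen $s_0$. In the elliptic case $L=\Delta-\nabla V\cdot\nabla$, I would perform the ground-state (Doob $h$-)transform $\tilde r(t,x,y) := e^{-V(x)/2-V(y)/2}\,r(t,x,y)$, which produces a symmetric (w.r.t. Lebesgue) kernel associated with a Schrödinger operator $\Delta - W$. A direct computation shows $W = \tfrac{1}{4}\abs{\nabla V}^2 - \tfrac{1}{2}\Delta V = -\tfrac{1}{2}\bigl(LV + \tfrac{1}{2}\Gamma(V,V)\bigr)$, so \eqref{eq=hypotheseCm} reads $W \geq -C_m/2$. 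By Feynman--Kac, $\tilde r(t,x,x) \leq e^{C_m t/2}\,G_t(x,x) = e^{C_m t/2}(4\pi t)^{-n/2}$, hence
\[
r(2s_0,x,x) \;\leq\; e^{V(x)}\,(8\pi s_0)^{-n/2}\,e^{C_m s_0}.
\]
With $s_0 = \tfrac{1}{2\pi}e^{2C_m/n}$, the factor $(8\pi s_0)^{-n/2}$ equals $4^{-n/2}e^{-C_m}$, so the right-hand side collapses to an absolute constant times $e^{V(x)}$. In the more general hypoelliptic setting, the same reduction goes through but only qualitative heat-kernel bounds are available, so one obtains the first (unquantified $C$) form of \eqref{eq=boundOnTheMoment}, while the explicit $e^{\theta s_0}$ is obtained in the elliptic case.

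\textbf{Main obstacle.} Steps 1 and 2 are essentially bookkeeping on top of tools already in the paper. The real work is Step 3: matching the heat-kernel bound to the specific constant $s_0 = \tfrac{1}{2\pi}e^{2C_m/n}$, which requires the Feynman--Kac/ground-state reformulation to turn the drift condition \eqref{eq=hypotheseCm} into a one-sided bound on a Schrödinger potential, and then a careful balancing of the Gaussian on-diagonal decay $(4\pi t)^{-n/2}$ against the Feynman--Kac factor $e^{C_m t/2}$.
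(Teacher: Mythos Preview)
Your proof is correct and follows essentially the paper's route; Steps 1 and 2 match the paper's argument line by line. The only noteworthy difference is the mechanism in Step 3: the paper obtains the $\L^2(\mu)$ bound on $r(s,x,\cdot)$ via a Girsanov change of measure to the drift-free law $\Q_x$ (generator $L'=\sum_j X_j^2+\sum_j \dvg X_j\, X_j$), rewriting the Radon--Nikodym density by It\^o/Feynman--Kac as $G_t=\exp\bigl(\tfrac12 V(\omega_0)-\tfrac12 V(\omega_t)-\tfrac12\int_0^t(\tfrac12\Gamma(V,V)-L'V)\,ds\bigr)$ and then bounding $\int r^2(s,x,\cdot)\,d\mu$ directly; you instead conjugate by $e^{V/2}$ to pass to the Schr\"odinger semigroup $e^{t(\Delta-W)}$ and bound the on-diagonal kernel. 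These are two standard, equivalent packagings of the same Feynman--Kac identity and produce the same estimate --- your ground-state version is a little cleaner in the purely elliptic case, while the Girsanov formulation is what lets the paper treat the uniformly strongly hypoelliptic case in one stroke (with the Kusuoka--Stroock type bound $q(t,x,y)\le C(1\wedge t)^{-M}$ replacing the Gaussian on-diagonal estimate).

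One small slip in Step 1 for the case $\mu(U)\ge 1/2$: on $\{T_U>t\}$ one has $\tfrac1t\int_0^t \BBone_{U^c}(X_s)\,ds=1$, so the deviation of the centered function $\BBone_{U^c}-\mu(U^c)$ that must be exceeded is $1-\mu(U^c)=\mu(U)$, not $1-\mu(U)$ as you wrote.
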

\begin{proof}

The first statement has already been proved.

If we assume the additional boundedness hypothesis (eq. \eqref{eq=hypotheseCm}),
  we can use stochastic calculus to get good bounds: the idea is that the density of the law
  of $X_t$ with respect to $\mu$ is computable, and its $L^2$ norm can be bounded.

 First of all recall that $L=\sum X_j^2 + Y$. Since
$\mu$ is symmetric
$$Y = \sum_j \dvg X_j \,  X_j - \sum_j X_j V \, X_j \, .$$ If we denote by $\Q_x$ the law of the
diffusion process starting from $x$ with generator $$L' = \sum_j X_j^2 + \sum_j \dvg X_j \, X_j$$ we have a
Girsanov type representation
\begin{eqnarray*}
G_t : = \frac{d\P_x}{d\Q_x}|_{\mathcal F_t} & = & \exp \left(- \frac 12 \, \int_0^t \, \langle X_j
V(\omega_s),d\omega_s\rangle \, - \, \frac 14 \, \int_0^t \, \Gamma(V,V)(\omega_s) \, ds\right) \\
& = & \exp \left(\frac 12 \, V(\omega_0) - \frac 12 V(\omega_t) - \, \frac 12 \, \int_0^t \,
(\frac 12 \, \Gamma(V,V)- L'V)(\omega_s) \, ds\right) \, ,
\end{eqnarray*}
the latter (Feynman-Kac representation) being obtained by integrating by parts the stochastic
integral. We can now follow an argument we already used in previous works. We write the details for
the sake of completeness.

Thanks to the uniform strong hypoellypticity we know that the marginal law at time $t$ of $\Q_x$
has an everywhere positive smooth density $q(t,x,.)$ w.r.t. Lebesgue measure satisfying for some
$M$ (see e.g. \cite{cathypo1} theorem 1.5) $$|q(t,x,y)|\leq C \, (1\wedge t)^{-M} \quad \textrm{
for all } \quad x,y \in \R^n \, .$$ Hence
\begin{align*}
\E_x[f(X_t)] & =  \E^{\Q_x}[f(\omega_t) \E^{\Q_x}[G|\omega_t]] \\
& = \int \, f(y) \, \E^{\Q_x}[G|\omega_t=y] \, q(t,x,y) \, dy \\
& = \int \, f(y) \, \E^{\Q_x}[G|\omega_t=y] \, q(t,x,y) \, e^{V(y)} \, \mu(dy) \, ,
\end{align*}
In other words, the law of $X_t$  has a density with respect to $\mu$ given by
\[ r(t,x,y)=\E^{\Q_x}[G|\omega_t=y] \, q(t,x,y) \, e^{V(y)} \, .\]
Hence
\begin{align*}
\int_0^{+\infty} \, r^2(t,x,y) \, \mu(dy)
& = \int \, \left(\E^{\Q_x}[G|\omega_t=y] \, q(t,x,y) \, e^{V(y)}\right)^2 \, e^{-V(y)}  \, dy \\
& = \E^{\Q_x} \left[q(t,x,\omega_t) \, e^{V(\omega_t)} \, \left(\E^{\Q_x}[G|\omega_t]\right)^2 \right] \\
& \leq \E^{\Q_x} \left[ q(t,x,\omega_t) \, e^{V(\omega_t)} \, \E^{\Q_x}[G^2|\omega_t] \right] \\
& \leq  e^{V(x)} \, \E^{\Q_x} \left[
    q(t,x,\omega_t) \, e^{- \int_0^t \, (\frac 12 \, \Gamma(V,V)- L'V)(\omega_s) ds}
    \right] \\
& \leq  C \, (1\wedge t)^{-M} \, e^{V(x)} \, e^{C_m t} \, .
\end{align*}
Hence the law at time 1 of $X_.$ has a density belonging to $\L^2(\mu)$. Using the result in
\cite{CatGui2} we have recalled and the Markov property we thus have for $t>1$
\[
\P_x(T_U>t) \leq
D \, e^{V(x)/2} \, e^{- \, \frac{(t-1) \, \mu(U)}{8C_P (1 - \mu(U))} }
\]
hence the result by lemme \ref{lmm=lemmeTrivial}.

Finally, if $L=\Delta - \nabla V.\nabla$,  we can be even more precise.

  Indeed $q(t,x,y)
  \leq (2\pi t)^{-n/2}$ so that for $t>s>0$, using \eqref{eqQueueDeT} we obtain
\[
\P_x(T_U>t) \leq (2\pi s)^{-n/4} \, e^{C_m s/2} \,  e^{V(x)/2} \, e^{- \, (t-s) \,
\theta(U) } \, .
\]
Choosing $s_0=\frac{1}{2\pi} \,e^{2C_m/n}$ we get for $t>s_0$,
\[ \P_x(T_U>t) \leq  e^{V(x)/2} \, e^{- \,(t-s_0) \, \theta(U) } \, ,
\]
so that for $\theta < \theta(U)$, using lemma \ref{lmm=lemmeTrivial}, we get
\[
\E_x\left(e^{\theta \, T_U}\right)\leq e^{\theta s_0} \, \left(1 + \frac{\theta}{\theta(U) -
\theta}e^{V(x)/2}\right).
\]

If $\mu$ satisfies a Poincar\'e inequality with constant $C_P$, so does $\mu^{\otimes k}$ for any
$k\in \N^*$. It thus follows as before that for $x=(x_1,...,x_k)$ and $\theta < \theta(U)$,
\[
  \P_x(T_U>t) \leq (2\pi s)^{-nk/4} \, e^{k \, C_m s/2} \,  e^{\sum_i V(x_i)/2} \, e^{- \, (t-s) \,
\theta(U) } \, ,
\]
so that for the same $s_0$,
\[
  \E_x\left(e^{\theta \, T_U}\right)\leq e^{\theta s_0} \, \left(1 + \frac{e^{\sum_i V(x_i)/2}}{\theta(U) -
\theta}\right).\]

\end{proof}

\begin{remark}\label{remstokes}
In the same way, when $L=\Delta - \nabla V.\nabla$, one can improve upon the constant if we assume
in addition that
\begin{equation}\label{eq:condstokes}
  \begin{array}{c}
\textrm{$U$ has a smooth boundary and } \frac{\partial W}{\partial n} \leq 0 \textrm{ on }
\partial U \\
 \textrm{ where $n$ denotes the outward normal to the boundary.} \end{array}
\end{equation}

Indeed in this case we can directly integrate by parts in $U^c$ using Stokes theorem. This yields
\begin{eqnarray*}
\int_{U^c} \, f^2 \dmu & \leq & \frac{1}{\lambda} \, \int_{U^c} \, f^2 \, \frac{-LW}{W} \,
e^{-V} \, dx \\ & \leq & \frac{1}{\lambda} \,  \int_{U^c} \, \frac{f^2}{W} \, (-\Delta W + \nabla
V.\nabla W) \, e^{-V} \, dx \\ & \leq & \frac{1}{\lambda} \,  \int_{U^c} \,
\left(\nabla\left(\frac{f^2 \, e^{-V}}{W}\right).\nabla W \, e^V + \frac{f^2}{W} \, \nabla
V.\nabla W\right) \, e^{-V} \, dx \, \\ & & + \, \frac{1}{\lambda} \, \int_{\partial U} \,
\left(\frac{f^2 \, e^{-V}}{W}\right) \, \frac{\partial W}{\partial n} \, dm_{\partial U}\\ & \leq
& \frac{1}{\lambda} \,  \int_{U^c} \, \nabla\left(\frac{f^2}{W}\right).\nabla W  \, e^{-V} \,
dx\\& \leq & \frac{1}{\lambda} \,  \int_{U^c} \, \left(|\nabla f|^2 -|\nabla f - \frac{f}{W}
\nabla W|^2\right) \dmu \\ & \leq & \frac 1\lambda \, \int_{U^c} \, |\nabla f|^2  \dmu \, .
\end{eqnarray*}
Therefore we  obtain in this case
\begin{equation}\label{eqpoincstokes}
C_P \leq \frac 1\lambda + C_P(U) \, ,
\end{equation}
which is of course much better than any other bound we gave.
\end{remark}
\bigskip

Recall that if $m_\mu(f)$ denotes a $\mu$ median of $f$, one has
\begin{equation}\label{eqmeanmed}
\Var_\mu(f) \leq \E_\mu[(f-m_\mu(f))^2] \leq 2 \, \Var_\mu(f) \, ,
\end{equation}
so that one may replace the variance by the squared distance to any median in Poincar\'e
inequality up to some universal constants. Using our previous results, we shall see that we may
replace the mean of $f$ by local means or values. Here is a first result in this direction.

\begin{theorem}\label{thmpoincdelocal}
Let $d\mu=e^{-V} \, dx$ be a probability measure satisfying a Poincar\'e inequality with constant
$C_P$, $a\in \R^n$ and $r>0$.

We assume that one can find a sequence $V_k$ of smooth functions such that $d\mu_k = e^{-V_k} dx$
 converges weakly to $\mu$ and $V_k$ converges uniformly  to $V$ on $B(a,2r)$.

Then there exists an universal constant $\kappa$ such that for all $f\in C_b^1$ with
$\int_{B(a,2r)} f \dmu = 0$, the following inequality holds $$\int \, f^2 \dmu \, \leq \,
\left(\frac{32 \, C_P}{\mu(B(a,r))} \, \left[1 + 2 \, \frac{\kappa \, e^{\Osc_{B(a,2r)}
V}}{n}\right] \, + \, 2 \, \frac{\kappa \, r^2 \, e^{\Osc_{B(a,2r)} V}}{n} \right) \, \int \,
|\nabla f|^2 \dmu \, .$$
\end{theorem}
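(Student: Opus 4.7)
The plan is to combine the proof of (H1) $\Rightarrow$ (H4) from Theorem \ref{thmmaindiff} with an explicit Lyapunov function coming from hitting times. Set $U = B(a,r)$; assume first that $V$ is smooth (so the strongly hypoelliptic machinery is available) and that $\mu(U) \leq 1/2$. Fix $\theta < \theta(U) = \mu(U)/(8 C_P (1-\mu(U)))$. By Proposition \ref{propentry}, $W(x) = \E_x[e^{\theta T_U}]$ is finite and locally bounded, satisfies $W \geq 1$, and solves $LW + \theta W = 0$ on $\bar U^c$ with boundary value $1$; hypoellipticity then makes $W$ smooth, so that $W$ is a Lyapunov function with $\lambda = \theta$.

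Pick a smooth cutoff $\chi$ with $\chi \equiv 1$ on $B(a,r)$, $\chi \equiv 0$ off $B(a,2r)$, and $\|\nabla \chi\|_\infty \leq C/r$. Following the cutoff scheme of the (H1) $\Rightarrow$ (H4) proof, bound $f^2 \leq 2(f(1-\chi))^2 + 2(f\chi)^2$, apply inequality \eqref{eqipp} to $f(1-\chi)$ (supported in $\bar U^c$, where $-LW/W \geq \theta$), and expand $\Gamma(f(1-\chi),f(1-\chi))$ via the Leibniz-type bound; this yields
\begin{equation*}
\int f^2\, d\mu \, \leq \, \frac{4}{\theta}\int |\nabla f|^2\, d\mu \, + \, \left(\frac{4\|\nabla \chi\|_\infty^2}{\theta}+2\right)\int_{B(a,2r)} f^2\, d\mu.
\end{equation*}
Since $\int_{B(a,2r)} f\, d\mu = 0$, the last term is controlled by a local Poincaré inequality on $B(a,2r)$: comparing the renormalized measure to normalized Lebesgue via Holley-Stroock produces the perturbation factor $e^{\Osc_{B(a,2r)} V}$, and the Neumann Poincaré constant on a Euclidean ball of radius $2r$ in $\R^n$ scales as $r^2/n$, so
\begin{equation*}
\int_{B(a,2r)} f^2\, d\mu \, \leq \, \frac{\kappa\, r^2\, e^{\Osc_{B(a,2r)} V}}{n}\int |\nabla f|^2\, d\mu
\end{equation*}
for a universal constant $\kappa$.

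Sending $\theta \uparrow \theta(U)$ and collecting terms, the coefficient of $\int |\nabla f|^2 d\mu$ splits as $\frac{32 C_P (1-\mu(U))}{\mu(U)}\bigl[1 + O(e^{\Osc V}/n)\bigr] + O(r^2 e^{\Osc V}/n)$, which matches the stated bound after using $1-\mu(U) \leq 1$ and absorbing numerical constants into $\kappa$. The case $\mu(U) \geq 1/2$ is handled identically using $\theta(U) = \mu(U)^2/(2C_P)$, since $8C_P/\mu(U)^2 \leq 32 C_P/\mu(U)$ there. To drop smoothness of $V$, apply the estimate to each $V_k$ and pass to the limit: uniform convergence on $B(a,2r)$ controls $\mu_k(B(a,r))$, $\Osc_{B(a,2r)} V_k$ and the local Poincaré factor, while weak convergence $\mu_k \to \mu$ together with $f \in C_b^1$ handles $\int f^2 d\mu_k$ and $\int |\nabla f|^2 d\mu_k$. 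The main obstacles are pinning down the explicit local Poincaré constant $\kappa r^2 e^{\Osc V}/n$ on the ball, and verifying that the numerical constants produced by the cutoff fit inside the bracketed factor $[1+2\kappa e^{\Osc V}/n]$; a secondary subtlety is that the approximation step must preserve the normalization $\int_{B(a,2r)} f\, d\mu = 0$ in the limit.
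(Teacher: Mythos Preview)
Your approach is essentially the same as the paper's: build the Lyapunov function from the exponential moment of $T_{B(a,r)}$, feed it into the cutoff argument of (H1)$\Rightarrow$(H4), and close with the local Poincar\'e inequality on $B(a,2r)$ via Holley--Stroock perturbation of the Lebesgue Poincar\'e constant $\kappa r^2/n$.

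The only noteworthy difference is bookkeeping. You work with $\theta < \theta(U)$ and then send $\theta \uparrow \theta(U)$, which forces a case split on $\mu(U)\lessgtr 1/2$ and produces an extra factor $(1-\mu(U))$ that you then bound by $1$. The paper sidesteps all of this by directly taking $\lambda = \mu(U)/8C_P$, which is strictly below $\theta(U)$ in both regimes, so $W(x)=\E_x[e^{\lambda T_U}]$ is already a Lyapunov function with that exact rate; then $4/\lambda = 32C_P/\mu(B(a,r))$ falls out with no limit and no cases. Likewise, the paper fixes the cutoff so that $|\nabla\chi|^2 \le 2/r^2$, which makes the bracketed constant $[1 + 2\kappa e^{\Osc V}/n]$ appear on the nose rather than ``after absorbing numerical constants into $\kappa$''. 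These are cosmetic simplifications, not a different argument.
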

\begin{proof}
The underlying stochastic process has for infinitesimal generator $L=\Delta - \nabla V. \nabla$.
We start with assuming that $V$ is smooth.

If $U=B(a,r)$, $U_r=B(a,2r)$ and we may find some function $\chi$ such that $\BBone_U \leq
\chi \leq \BBone_{U_r}$ with $|\nabla \chi|^2 \leq 2/r^2$. According to previous arguments and
the proof of theorem \ref{thmmaindiff} we know that $W(x)=\E_x[e^{\lambda \, T_U}]$ is a
Lyapunov function for $\lambda = \mu(U)/8C_P$. Hence for all smooth $f$
\begin{equation}\label{eqpoincdelocal}
\int \, f^2 \dmu \, \leq \, \frac 4\lambda \, \int \, |\nabla f|^2 \dmu + \left(\frac {8}
{r^2 \, \lambda} + 2\right) \, \int_{U_r} \, f^2 \dmu \, .
\end{equation}
It is well known that the Lebesgue measure satisfies a Poincar\'e inequality
\begin{equation}\label{poinclebesgue}
\int_{U_r} \, f^2 \, dx \, \leq \, \kappa \, \frac{r^2}{n} \, \int \, |\nabla f|^2 \, dx \, ,
\end{equation}
for all $f$ such that $\int_{U_r} f \, dx = 0$ for some universal constant $\kappa$ (we shall
revisit similar results later on). Accordingly, using a standard perturbation argument, we have
\begin{equation}\label{poinclocalball}
\int_{U_r} \, f^2 \dmu \, \leq \, \kappa \, \frac{r^2}{n} \, e^{\Osc_{U_r}V} \, \int_{U_r} \,
|\nabla f|^2 \dmu \, + \, \left(\int_{U_r} \, f \dmu\right)^2 \, .
\end{equation}
The result follows for smooth $V's$. In the general case we approximate $V$ and remark that, first
the Poincar\'e constants for the approximating measures converge to $C_P$ as well as the
Oscillation term with our assumptions.
\end{proof}

\begin{remark}\label{reminteret}
We state the result for balls $B(a,r)$ for simplicity. Of course the proof can be adapted to more
general subsets. Also notice that the possible interesting situations are for small $r's$.

If $A$ is a connected open domain the previous result applies to the uniform measure on $A$ and
$B(a,2r) \subset A$. The oscillation of $V$ is then equal to $0$.

\end{remark}

\bigskip

\section{Probability measures on the line.}\label{line}

In this section we shall look at the case $n=1$, $\mu(dx) = Z^{-1} \, e^{-V(x)} \, dx$ where
$Z$ is a normalization constant. Since the Poincar\'e constant is unchanged by translating the
measure we may also assume that $\int x d\mu =0$.

General bounds for the Poincar\'e constant are well known using Hardy-Muckenhoupt weighted
inequalities (see e.g. \cite{logsob}). Another approach was recently proposed in \cite{LLS}
where bounds for both the Poincar\'e constant and the exponential moment for hitting times are
obtained, through the rate function and speed measure. Notice that the results of section
\ref{secexit} seem to be less precise in the one-dimensional situation but cover all possible
dimensions.
\medskip

\subsection{Super-linear and log-concave one dimensional distributions.\\}\label{subsec1log}

Our interest here is to describe the Poincar\'e constant for particular $\mu$ including the
log-concave situation. The log-concave situation indeed deserved a lot of interest due to the
belief that, in the multidimensional isotropic case (namely the covariance matrix is the identity), it is close to the independent one. It is therefore particularly relevant to get bounds on functional inequalities in terms of the variance. For log-concave measures $\mu$ on the line
Bobkov (\cite{bob99}) proved that
\begin{equation}\label{eqbob1}
\Var_\mu(x) \leq C_P(\mu) \leq 12 \, \Var_\mu(x)
\end{equation}
where $x$ denotes the identity function. One can also look at another approach in \cite{Fou}.

In our previous work \cite{BBCG} we have shown how to use the Lyapunov function method to
recover the general result of Bobkov saying that any log-concave probability measure (in any
dimension) satisfies a Poincar\'e inequality. Here we shall be more precise for the one
dimensional case and we shall recover a bad version of Bobkov's result \eqref{eqbob1}, i.e.
with a worse constant larger than 12 but for more general measures. We start with some
definitions.

\begin{definition}\label{def1}
Let $\mu(dx) = Z^{-1} \, e^{-V(x)} \, dx$ be a probability measure on the line. We assume that
there exists $V_{min} > - \infty$ such that $V_{min} \leq V \leq +\infty$. Let $a\in \R$ such
that $a\in Argmin(V)$ (such an $a$ exists if for instance $V$ is continuous on $V<+\infty$).
We may assume that $V_{min}=\min V$.

For $\beta >0$ we denote by $R_+(\beta)$ any positive number such that $V(a+u) -V(a) \leq
\beta$ for all $0\leq u \leq R_+(\beta)$ and similarly $R_-(\beta)$ on the left hand side of
$a$. Finally $R(\beta)=R_+(\beta)\vee R_-(\beta)$.

We shall say that $V$ is $\beta$-superlinear if for $t\geq R_+(\beta)$ (resp. $t\geq
R_-(\beta)$) one has $$V(a+t) - V(a) \geq  \frac{c_\beta}{R(\beta)} \, t  - h_\beta \quad
\left(\textrm{resp. } V(a-t) - V(a) \geq \frac{c_\beta}{R(\beta)} \, t - h_\beta \right)$$ for
some non-negative constant $c_\beta$ and some $h_\beta$.
\end{definition}

\begin{remark}\label{remderiv}
Let $\mu(dx) = Z^{-1} \, e^{-V(x)} \, dx$ be a probability measure on the line, with $V$ of
class $C^1$. We assume that $\min V =0 = V(a)$ and that there exist $\beta >0$ and $\theta >0$
such that $\textrm{sign}(x-a) \, V'(x) \geq \theta$ outside some subset $N_\beta$ of the level
set $\{V \leq \beta\}$.

Since $\textrm{sign}(x-a) \, V'(x) \geq \theta$ outside $N_\beta$ it is easily seen that
$N_\beta$ is necessarily a closed interval. We thus choose $R_+(\beta)$ and $R_-(\beta)$ such
that $N_\beta=[a - R_-(\beta),a+R_+(\beta)]$. We may assume that $R_+(\beta)\geq R_-(\beta)$.
\smallskip

For $x \geq a + R_+(\beta)$, our assumptions furnish
\begin{eqnarray*}
V(x) & \geq & V(x) - V(a+ R_+(\beta)) \\ & \geq & \theta \, (x-a-R_+(\beta)) \\ & \geq &
 \frac{c_{\beta}}{R_+(\beta)} \, (x-a) - h_{\beta}
\end{eqnarray*}
where $c_{\beta}=\theta \, R_+(\beta)= h_{\beta}$.

For $x \leq a - R_-(\beta)$ we have the same result of course, still with $c_{\beta}=\theta \,
R_+(\beta)$ a priori with $h=\theta \, R_-(\beta)$ which is smaller than $h_\beta$, so that
the result still holds with $h_\beta$.

Hence $V$ is $\beta$-superlinear. Actually it is $\beta'$-superlinear for any $\beta'\geq
\beta$.

Our definition looks thus unnecessarily intricate. However, we shall see that is well
appropriate for the isotropic normalization.
\end{remark}

The next lemma allows us to compare the variance and the $\beta$ level set values,
\begin{lemma}\label{lem1}
Assume that $V$ is $\beta$-superlinear and that $\int x d\mu = 0$. , then
$$R_+^2(\beta) \vee R_-^2(\beta) \leq 12 \, \Var_\mu(x) \, e^{\beta} \, \left(1 + \, \frac{2 \, e^{h_\beta}}{c_\beta}\right) \,
,$$and $$ R_+^2(\beta) \vee R_-^2(\beta) \geq \frac 12 \, \Var_\mu(x) \, e^{-\beta} \,
\left(\frac 13 + \frac{e^{h_\beta}}{c_\beta} \, \left(1 + \frac{ 2}{c_\beta} +
\frac{2}{c_\beta^2}\right)\right)^{-1} \, .$$
\end{lemma}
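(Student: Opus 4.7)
My plan is to bracket the normalizing constant $Z=\int e^{-V(x)}\,dx$ sharply and then sandwich $\Var_\mu(x)$ between the bulk integral of $x^2$ and the shifted second moment $\int(x-a)^2\dmu$. By reflection symmetry I may assume $R_+\ge R_-$, so $R=R_+$.

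I first bound $Z$. Since $V\le\beta$ on the bulk $[a-R_-,a+R_+]$, I immediately get $Z\ge(R_++R_-)e^{-\beta}\ge Re^{-\beta}$. For the upper bound, $V\ge 0$ contributes at most $R_++R_-$ from the bulk, and the superlinear estimate $V(x)\ge(c_\beta/R)|x-a|-h_\beta$ gives at most $(R/c_\beta)e^{h_\beta}$ per tail. Using $R\le R_++R_-$,
$$
Z\;\le\;(R_++R_-)+\frac{2R\,e^{h_\beta}}{c_\beta}\;\le\;(R_++R_-)\Bigl(1+\frac{2e^{h_\beta}}{c_\beta}\Bigr).
$$
The key to landing on the sharp constants is to retain the factor $(R_++R_-)$ rather than $2R$ here, and to invoke $R\le R_++R_-$ only at the very end.

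For the first inequality, the centering $\int x\dmu=0$ gives $\Var_\mu(x)=\int x^2\dmu\ge \tfrac{e^{-\beta}}{Z}\int_{a-R_-}^{a+R_+}x^2\,dx$. The Lebesgue integral on the right is a quadratic in $a$, minimized at $a=(R_--R_+)/2$, where a direct computation shows it equals $(R_++R_-)^3/12$. Dividing by the $Z$-upper bound cancels one factor of $(R_++R_-)$ and leaves $(R_++R_-)^2\ge R^2$, giving
$$
\Var_\mu(x)\;\ge\;\frac{e^{-\beta}\,R^2}{12\bigl(1+2e^{h_\beta}/c_\beta\bigr)},
$$
which is the first claim after rearranging.

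For the second inequality, the identity $\Var_\mu(x)=\int(x-a)^2\dmu - a^2\le\int(x-a)^2\dmu$ reduces matters to bounding $\tfrac{1}{Z}\int t^2 e^{-V(a+t)}\,dt$. The bulk $|t|\le R_\pm$ contributes at most $(R_+^3+R_-^3)/3\le 2R^3/3$ by $V\ge 0$. Each tail is controlled using the superlinear lower bound and the elementary identity $\int_T^\infty t^2 e^{-c_\beta t/R}\,dt=(R/c_\beta)^3\bigl((c_\beta T/R)^2+2c_\beta T/R+2\bigr)e^{-c_\beta T/R}$, uniformly bounded by $R^3 e^{h_\beta}(1/c_\beta+2/c_\beta^2+2/c_\beta^3)$ for each tail. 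Summing the bulk and the two tails and dividing by $Z\ge Re^{-\beta}$ yields exactly the stated second bound. The main hurdle throughout is the bookkeeping of constants; the analytic ingredients (quadratic minimization in $a$ and elementary integration of polynomial-times-exponential) are routine.
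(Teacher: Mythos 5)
Your proof is correct, and the overall strategy is the same as the paper's: bracket the normalizing constant $Z$ between $Re^{-\beta}$ and $(R_++R_-)(1+2e^{h_\beta}/c_\beta)$, then compare bulk second-moment integrals against the variance. For the second inequality your argument is essentially identical to the paper's. For the first inequality your handling is a genuine (and welcome) improvement: you compute $\int_{a-R_-}^{a+R_+}x^2\,dx$ over the whole bulk, observe that as a quadratic in $a$ its minimum is $(R_++R_-)^3/12$, and divide by the $Z$ upper bound, so the surviving factor $(R_++R_-)^2\ge R_+^2$ gives the constant $12$ cleanly. The paper instead uses only $\int_a^{a+R_+}x^2\,dx$, divides by $R_+$, and then splits into the cases $a>0$ and $a\le 0$; this requires the bound $Z\le R_+(1+2e^{h_\beta}/c_\beta)$, which is not quite what the computation of $Z$ actually yields (one really gets $(R_++R_-)+2e^{h_\beta}R_+/c_\beta$, which need not be $\le R_+(1+2e^{h_\beta}/c_\beta)$ when $R_->0$). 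Your route keeps the factor $R_++R_-$ throughout, so it both avoids the case analysis and sidesteps that small gap while landing on the stated constants.
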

The result is of course coherent with the previous remark \ref{remnorm}.
\begin{proof}
We may and will assume that $V(a)=0$ (just modify $Z$). We fix once for all $\beta$ and thus
skip the dependence in $\beta$ for notational convenience. Let $R=R_+ + R_-$ denote by
$\sigma^2$ the variance of $\mu$.

Since $V$ is $\beta$-superlinear we have
\begin{eqnarray*}
R(\beta) \, e^{-\beta} & \leq &  \int_{a-R_-}^{a+R_+} \, e^{-\beta} dx \leq
\int_{a-R^-}^{a+R_+} \, e^{-V(x)} dx \leq
 Z \\ & \leq & \int_{a-R_-}^{a+R_+} dx + e^{h} \, \left(\int_{a+R_+}^{+\infty} \, e^{- \frac{c}{R(\beta)} \, (x-a)} \, dx
  + \int_{-\infty}^{a-R_-} \, e^{- \frac{c}{R(\beta)} \, (a-x)} \, dx\right) \\ & \leq & R(\beta)
    \, \left(1 + 2 \, \frac{e^h}{c}\right) \, ,
\end{eqnarray*}
i.e. $$R(\beta) \, e^{-\beta} \leq Z \leq  R(\beta)  \, \left(1 + \frac{2 e^{h}}{c}\right) \,
.$$ By symmetry we may also assume that $R_+ \geq R_-$ so that it is enough to get an upper
bound for $R_+$. But
\begin{eqnarray*}
\frac{e^{-\beta}}{3} \, \left((a+R_+)^3 - a^3\right) & = & \int_{a}^{a+R_+} \, e^{-\beta} x^2
\, dx \leq \int_{a}^{a+R_+} \, x^2 \, e^{-V(x)} dx \leq
 Z \, \sigma^2 \, .
\end{eqnarray*}
Using $R(\beta)= R_+$ we thus obtain
\begin{equation}\label{eqvar1}
R_+^2 + 3a R_+ + 3a^2 \leq 3 \, \sigma^2 \, e^\beta \, \left(1 + \frac{2 e^{h}}{c}\right) \, .
\end{equation}
 If $a>0$ we
immediately obtain $R_+^2 \leq 3 \, \sigma^2 \, \left(1 + \frac{2 e^{h}}{c}\right)$. If $a\leq
0$ the minimal value of the left hand side in \eqref{eqvar1} (considered as a polynomial in
$a$) is obtained for $a = -R_+/2$ and is equal to $R_+^2/4$ so that we obtain in all cases
\begin{equation}\label{eqvar2}
R_+^2 \leq 12 \, \sigma^2 \, e^\beta \, \left(1 + \frac{2 e^{h}}{c}\right) \, .
\end{equation}
\smallskip

In the same way we see that if $a>0$ then $a^2 \leq 2 \, \sigma^2 \, e^\beta \, \left(1 +
\frac{2 e^{h}}{c}\right)$. If $a\leq 0$ the minimal value of the left hand side in
\eqref{eqvar1} (considered as a polynomial in $R_+$) is obtained for $R_+= - \frac 32 \, a$
and is equal to $3a^2/4$ so that we obtain in all cases
\begin{equation}\label{eqvar3}
a^2 \leq 4 \, \sigma^2 \, e^\beta \, \left(1 + \frac{2 e^{h}}{c}\right) \, .
\end{equation}
\smallskip

We turn to the second bound. Again we assume that $R_+\geq R_-$. Recall that $\sigma^2 \leq
\int (x-a)^2 d\mu$. Similarly to the first bound we can thus write
\begin{eqnarray*}
Z \, \sigma^2 & \leq &  \int (x-a)^2 \, e^{-V(x)} \, dx \\ & \leq & \int_{a-R_-}^{a+R_+} \,
(x-a)^2 \, dx + \, e^{h} \left(\int_{a+R_+}^{+\infty} \, (x-a)^2  e^{- \frac{c}{R(\beta)} \,
(x-a)} \, dx
  + \int_{-\infty}^{a-R_-} \, (x-a)^2  e^{- \frac{c}{R(\beta)} \, (a-x)} \, dx\right) \, ,\\ & \leq &
  \frac 13 \, (R_+^3+R_-^3) + e^h  \, \frac{R_+^3+ R_-^3}{c} \, \left(1 + \frac
2c + \frac{2}{c^2}\right) \, , \\ & \leq & 2 \, R_+^3 \, \left(\frac 13 + \frac{e^{h}}{c} \,
\left(1 + \frac 2c + \frac{2}{c^2}\right)\right) \, .
\end{eqnarray*}
Using $Z \geq R_+ e^{-\beta}$ we thus obtain
$$R_+^2 \geq \frac 12 \, \sigma^2 \, e^{-\beta} \, \left(\frac 13 + \frac{e^{h}}{c} \,
\left(1 + \frac 2c + \frac{2}{c^2}\right)\right)^{-1} \, .$$
\end{proof}

We turn to the study of the Poincar\'e constant.

\begin{remark}\label{remnorm} \quad If $m=\int x \, d\mu$ the measure $e^{-V(x+m)} dx$ is centered and
share the same Poincar\'e constant as $\mu$. Replacing $a$ by $a+m$ we may and will assume
that $m=0$.

Similarly if we consider the probability measure $\mu_u = u \, e^{-V(ux)} dx$, we have $u^2 \,
\Var_{\mu_u}(x)= \Var_\mu(x)$ and an easy change of variables shows that $u^2 \, C_P(\mu_u) =
C_P(\mu)$. So we can assume that $\Var_\mu(x)=1$.

If $V$ is $\beta$-superlinear, it is easy to see that $V_u$ (defined by $V_u(x)=V(ux)$) is
still $\beta$-superlinear, with the same constants $c_\beta$ and $h_\beta$, but replacing
$R_\beta$ by $R_u(\beta)=R_\beta/u$. \hfill $\diamondsuit$
\end{remark}
\medskip

\begin{proposition}\label{proppoincbeta}
Let $\mu(dx) = Z^{-1} \, e^{-V(x)} \, dx$ be a probability measure on the line, with $V$ of
class $C^1$. We assume that $\min V =0 = V(a)$ and that there exist $\beta_0 >0$ and $\theta
>0$ such that $\textrm{sign}(x-a) \, V'(x) \geq \theta$ outside some subset $N_{\beta_0}$ of
the level set $\{V \leq \beta_0\}$.

Then there exists a constant $C(\beta_0,\theta)$ such that the Poincar\'e constant $C_P(\mu)$
satisfies $$C_P(\mu) \leq C(\beta_0,\theta) \, \Var_\mu(x) \, .$$
\end{proposition}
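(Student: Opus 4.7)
The plan: follow the Lyapunov route established by Theorem~\ref{thmmaindiff}. First, by Remark~\ref{remnorm} I may assume $\int x \dmu = 0$ and $\Var_\mu(x)=1$; so the task reduces to exhibiting a constant $C(\beta_0,\theta)$ with $C_P(\mu)\leq C(\beta_0,\theta)$. By Remark~\ref{remderiv} the hypothesis translates into $V$ being $\beta_0$-superlinear, with explicit constants $c_{\beta_0}=h_{\beta_0}=\theta R_+(\beta_0)$. The first estimate in Lemma~\ref{lem1} then yields the quantitative control
\[
R_+(\beta_0)\vee R_-(\beta_0) \leq K(\beta_0,\theta),
\]
so the ``core'' interval $N_{\beta_0}=[a-R_-(\beta_0),a+R_+(\beta_0)]$ has length bounded only in terms of $\beta_0$ and $\theta$.

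Next I build an explicit Lyapunov function. Pick a smooth $W\geq 1$ that equals $e^{\gamma(x-a-R_+(\beta_0))}$ for $x\geq a+R_+(\beta_0)+\epsilon$, equals $e^{-\gamma(x-a+R_-(\beta_0))}$ for $x\leq a-R_-(\beta_0)-\epsilon$, and is smoothly interpolated in between (staying $\geq 1$), with $\gamma=\theta/2$ and $\epsilon$ a fixed small number. On the right exterior region, the hypothesis $V'(x)\geq \theta$ gives
\[
LW(x)=W''(x)-V'(x)W'(x)=(\gamma^2-\gamma V'(x))W(x)\leq \gamma(\gamma-\theta)W(x)= -\tfrac{\theta^2}{4}W(x),
\]
and symmetrically on the left. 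Hence (H1) holds with $\lambda=\theta^2/4$ and with $U$ the bounded open interval of length at most $R_+(\beta_0)+R_-(\beta_0)+2\epsilon\leq C_1(\beta_0,\theta)$ enclosing the interpolation region.

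Finally, I apply the quantitative form of (H1)$\Rightarrow$(H4) from Theorem~\ref{thmmaindiff}: using~\eqref{eqpoinclyap} with a fixed geometric enlargement $U_r$ of $U$ (say $r=1$) and a cutoff $\chi$ with $\|\chi'\|_\infty\leq 2$,
\[
C_P(\mu)\leq \frac{4}{\lambda}+\left(\frac{16}{\lambda}+2\right)C_P(U_r).
\]
All that remains is to bound the local Poincar\'e constant $C_P(U_r)$ in terms of $\beta_0$ and $\theta$ only. This is the main obstacle: because no upper bound is assumed on $V'$ outside $N_{\beta_0}$, the oscillation of $V$ on $U_r$ may be arbitrarily large, so the naive perturbation of Lebesgue measure on $U_r$ is useless. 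I overcome this by invoking the one-dimensional Muckenhoupt criterion on the compact interval $U_r$. The key computation is that the Muckenhoupt integrals
\[
A_\pm = \sup_{x \gtrless m_r}\ \int_x^{\partial_\pm U_r} e^{-V(y)}\, dy \cdot \int_{m_r}^x e^{V(y)}\, dy,
\]
where $m_r$ is the $\mu|_{U_r}$-median, are bounded by $C_2(\beta_0,\theta)$: in the region $|x-a|\geq R_\pm(\beta_0)$ the $\beta_0$-superlinearity makes $\int e^{-V}$ decay like $e^{-V(x)}/\theta$ while $\int e^V$ grows at most like $e^{V(x)}/\theta$, so their product collapses to a $(\beta_0,\theta)$-controlled quantity; in the ``flat'' part $N_{\beta_0}$ the bounds are trivial since $V\leq \beta_0$ there. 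Combining everything gives $C_P(\mu)\leq C(\beta_0,\theta)$, and undoing the normalization finishes the proof.
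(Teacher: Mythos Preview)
Your route differs from the paper's in a genuine way. The paper first replaces $\mu$ by the modified measure $\mu_\beta$ of \eqref{eqmumodif} (flattened to $e^{-\beta}$ on $N_\beta$), builds the Lyapunov function for $L_\beta$, and exploits that $\mu_\beta$ is \emph{uniform} on the core interval so that the local Poincar\'e constant is simply $R^2/\pi^2$; the passage back to $\mu$ is by the density comparison \eqref{eqcomparemodif}--\eqref{eqpoinccompare}. You instead keep $\mu$ throughout, build the Lyapunov function for $L$ itself, and handle the local Poincar\'e constant on $U_r$ by the one-dimensional Muckenhoupt criterion. Your Muckenhoupt sketch is essentially right (split at $a$ rather than at the median: on the steep part the product collapses to $O(1/\theta^2)$, and the flat-part contributions are bounded by $e^{\beta_0}$ times the length of $N_{\beta_0}$). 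This is a clean alternative that avoids the auxiliary measure, at the price of a slightly more delicate local estimate.

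There is, however, a real gap in your use of Lemma~\ref{lem1}. With the constants supplied by Remark~\ref{remderiv}, namely $c_{\beta_0}=h_{\beta_0}=\theta R_+(\beta_0)$, the first estimate of Lemma~\ref{lem1} reads (after normalizing $\sigma^2=1$)
\[
R_+^2\ \leq\ 12\,e^{\beta_0}\Bigl(1+\tfrac{2\,e^{\theta R_+}}{\theta R_+}\Bigr),
\]
and the right-hand side grows faster than $R_+^2$ as $R_+\to\infty$; so the inequality places \emph{no} upper bound on $R_+$, and your claimed control $R_+\vee R_-\leq K(\beta_0,\theta)$ does not follow. The bound you need is nevertheless true, but you must bypass the packaging of Lemma~\ref{lem1}: from $|V'|\geq\theta$ on $N_{\beta_0}^c$ one gets directly $Z\leq (R_++R_-)+2/\theta$ (instead of the cruder $Z\leq R(1+2e^{h}/c)$ used in the lemma's proof), and combining this with $e^{-\beta_0}\int_{N_{\beta_0}}(x-c)^2\,dx\leq Z$ yields a cubic inequality in $R_++R_-$ whose coefficients depend only on $\beta_0$ and $\theta$. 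Once this is in place your Lyapunov step and Muckenhoupt estimate go through, since both the length of $U_r$ and the flat-part contributions in $A_\pm$ are controlled by $R_++R_-$.
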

\begin{proof}
As we already remarked we can  assume that $\mu$ is centered, and $V$ being of $C^1$ class, we
have $Lg=g'' - V'g'$.

We know that $V$ is $\beta$-superlinear for any $\beta \geq \beta_0$. We denote by
$N_\beta=[a-R_-(\beta),a+R_+(\beta)]$. We shall modify $\mu$ introducing
\begin{equation}\label{eqmumodif}
\mu_\beta(dx) = Z_\beta^{-1} \, \left(e^{-V(x)} \, \BBone_{x\notin N_\beta} + e^{-\beta} \,
\BBone_{x \in N_\beta}\right) \, dx.
\end{equation}
Note that, according to  Lemma \ref{lem1} $$1 \leq \frac{Z}{Z_\beta} \leq e^{\beta} \left(1 +
\frac{2 \, e^{h_\beta}}{c_\beta}\right) \, .$$ It follows that
\begin{equation}\label{eqcomparemodif}
e^{-\beta} \leq \frac{d\mu_\beta}{d\mu} \leq  e^{2\beta} \left(1 + \frac{2 \,
e^{h_\beta}}{c_\beta}\right) \, .
\end{equation}
Accordingly we know that
\begin{equation}\label{eqpoinccompare}
C_P(\mu) \leq e^{3\beta} \left(1 + \frac{2 \, e^{h_\beta }}{c_\beta}\right) \, C_P(\mu_\beta)
\, .
\end{equation}
It remains to find an estimate for the Poincar\'e constant of $\mu_\beta$.
\smallskip

We have to face a small problem since the potential $V_\beta$  of $\mu_\beta$ is no more of
class $C^1$, but we still have a drift, i.e. $V'(x) \BBone_{x\notin N_\beta}$, and an easy
approximation procedure allows us to extend Theorem \ref{thmmaindiff} in this situation.

We denote by $L_\beta$ the associated generator i.e. $L_\beta f (x) = f''(x) - V'(x)
\BBone_{x\notin N_\beta} f'(x)$.

We shall now introduce a well chosen Lyapunov function. We define $$u(x) = |x| \,
\BBone_{|x|>1} + \left(\frac 38 + \frac 34 \, x^2 - \frac 18 \, x^4\right) \, \BBone_{|x|\leq
1} \, .$$ It is easily seen that $u$ is of class $C^2$.

Now for $a_\beta = a + \frac{R_+(\beta) - R_-(\beta)}{2}$ (which is the center of $L_\beta$),
and $R=R_+(\beta) + R_-(\beta)$ we define $$W_\beta(x) = e^{\gamma \, u(x-a_\beta)} \, .$$ An
easy calculation shows that
$$L_\beta W_\beta \leq \gamma (\gamma - \theta) W_\beta \textrm{ if } |x-a| \geq R \, .$$
Choosing $\gamma = \theta/2$, it follows that $W_\beta$ is a Lyapunov function i.e. satisfies
(H1) with $$\lambda = \frac 12 \, \theta^2 = \frac 12 \, \frac{c_{\beta}^2}{R^2} =
\frac{v(\beta)}{\Var_\mu(x)} \, ,$$ according to Lemma \ref{lem1}.

It is thus enough to apply \eqref{eqpoinclyap} with some care. First we replace $U$ by
$N_\beta$, then $U_r$ by $N_{2\beta}$. We may thus choose some $\chi$ such that
$\Gamma(\chi,\chi)$ is of order $(R_{2\beta}-R_\beta)^{-2}$ i.e. such that
$\Gamma(\chi,\chi)/\lambda$ only depends on $\beta$ (and not explicitly on $\Var_\mu(x)$).

Since $\mu_\beta$ is uniform on $N_\beta$, it is known that its Poincar\'e constant (in
restriction to $N_\beta$) is equal to $R^2/\pi^2$, and again thanks to Lemma \ref{lem1} it is
bounded independently of $V$ by some constant that only depends on $\beta$ and $\lambda$.

The proof is completed, and the reader easily sees why we did not give an explicit value for
the constant $C(\beta,\lambda,\Var_\mu(x))$.

\end{proof}
\medskip

\begin{remark}
The previous proposition is not surprising. It tells us that once the exponential
concentration (which is a consequence of the Poincar\'e inequality) rate at infinity is known,
and the bound of the density is given (at finite horizon), the Poincar\'e constant has to be
controlled up to the natural scaling in the variance. We have given a proof of this natural
conjecture under a strong form of concentration result. This result entails in particular
double-well potentials. Note that no bound on the second derivative is needed (except that the
first derivative has to stay greater than $\lambda$), so that the previous result contains
much more general situations than the log-concave situation. We may even build examples with a
Bakry-Emery curvature equal to $- \infty$.
\end{remark}
\smallskip

We turn to the log-concave situation. Since our method covers more general situations, it is
certainly not sharp. So it is an illusion to hope to recover the constant $12$ in Bobkov's
result. Hence we shall even not try to give an explicit constant.

\begin{theorem}\label{thm1d}
There exists a universal constant $C$ such that for all log-concave probability measure $\mu$
on the real line, $$C_P(\mu) \leq C \, \Var_\mu(x) \, .$$
\end{theorem}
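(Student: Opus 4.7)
The plan is to reduce the theorem to Proposition \ref{proppoincbeta} by showing that log-concavity automatically provides its hypothesis with universal constants. Writing $\mu(dx) = Z^{-1} e^{-V(x)} \, dx$, log-concavity means $V$ is convex. By Remark \ref{remnorm} (translation and scaling both preserve log-concavity and the ratio $C_P(\mu)/\Var_\mu(x)$), I may assume $\int x \, d\mu = 0$ and $\Var_\mu(x) = 1$; since $V$ is convex with $\int e^{-V} < \infty$, it attains its minimum at some $a$, and I normalize to $V(a) = 0$.

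The crucial observation is that convexity of $V$ automatically yields $\beta$-superlinearity in the sense of Definition \ref{def1} at every level $\beta > 0$, with the sharp constants $c_\beta = \beta$ and $h_\beta = 0$. Indeed, set $R_+(\beta) = \sup\{r \geq 0 : V(a+r) \leq \beta\}$; this is finite (otherwise convexity would force $V \leq \beta$ on a half-line, contradicting integrability of $e^{-V}$) and satisfies $V(a + R_+(\beta)) = \beta$ by continuity of convex functions. For $t \geq R_+(\beta)$, writing $a+R_+(\beta) = \tfrac{R_+(\beta)}{t}(a+t) + \bigl(1 - \tfrac{R_+(\beta)}{t}\bigr) a$ and applying convexity gives $\beta \leq \tfrac{R_+(\beta)}{t} V(a+t)$, that is, $V(a+t) \geq \beta t / R_+(\beta) \geq \beta t/R(\beta)$; the symmetric inequality holds on the left.

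Feeding this into Lemma \ref{lem1} with $\beta = 1$, $c_1 = 1$, $h_1 = 0$, and $\Var_\mu(x) = 1$ yields $R(1)^2 \leq 12 \cdot e \cdot (1 + 2) = 36 e$, so $R(1) \leq R_* := 6\sqrt{e}$, a universal constant. The same convexity argument shows that the (right) derivative of $V$ at any $x > a + R_+(1)$ dominates the chord slope $1/R_+(1) \geq 1/R_*$, and symmetrically on the left, so $\textrm{sign}(x-a)\, V'(x) \geq 1/R_*$ outside $[a-R_-(1), a+R_+(1)] \subset \{V \leq 1\}$. This is precisely the hypothesis of Proposition \ref{proppoincbeta} with $\beta_0 = 1$ and $\theta = 1/R_*$, both universal; applying it gives $C_P(\mu) \leq C(1, 1/R_*)$, a universal constant. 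Undoing the scaling normalization recovers the bound $C_P(\mu) \leq C \,\Var_\mu(x)$ for arbitrary log-concave $\mu$.

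The main technical obstacle is that a general log-concave density produces only a convex, not necessarily $C^1$, potential, while Proposition \ref{proppoincbeta} is stated for $V \in C^1$. I would handle this by the same kind of approximation trick invoked in the proof of that proposition: convolve $e^{-V}$ with a narrow Gaussian (which preserves log-concavity by the Pr\'ekopa--Leindler inequality) to obtain smooth log-concave approximations $\mu_\epsilon$ whose smooth convex potentials satisfy the structural hypothesis above with the same universal constants $\beta_0 = 1$ and $\theta = 1/R_*$; a standard weak-convergence argument then lets the universal bound pass to the limit.
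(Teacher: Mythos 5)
Your proof is correct and follows essentially the same route as the paper: reduce by the translation/scaling invariance of Remark \ref{remnorm}, use convexity of $V$ to establish $\beta$-superlinearity with universal constants, feed into Lemma \ref{lem1} to get a universal bound on $R(\beta)$ and hence on the derivative $\theta$ outside the level set, then invoke Proposition \ref{proppoincbeta} and smooth the potential by convolution. The one genuine (small) improvement is in how you extract superlinearity: you compare $V(a+t)$ against the chord through $(a, 0)$ and $(a+R_+(\beta), \beta)$, which immediately yields the clean constants $c_\beta = \beta$ and $h_\beta = 0$, whereas the paper compares the $\beta$ and $2\beta$ level sets and ends up with $c_{2\beta}=\beta$, $0 \le h_{2\beta}\le\beta$. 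Your version makes the numerical bound $R(1)\le 6\sqrt{e}$ and hence $\theta = 1/R_*$ slightly cleaner and avoids the second level set entirely, but the architecture of the argument is the same.
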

\begin{proof}
According to Remark \ref{remnorm} the result will follow if we prove the existence of the
universal constant $C$ for any log-concave measure with $\Var_\mu(x)=1$.

First we assume that $\mu(dx) = Z^{-1} \, e^{-V(x)} \, dx$ is a log-concave probability
measure on the line, with $V$ a $C^1$ function. We assume that $\min V =0 = V(a)$ and
$\Var_\mu(x)=1$.

Since $V$ is convex it is easily seen that for any $\beta>0$, $N_\beta$ is necessarily a
closed interval denoted again $[a-R_-(\beta),a+R_+(\beta)]$.

In particular if $x \geq a + R_+(2\beta)$, the convexity of $V$ yields
\begin{eqnarray*}
V(x) & \geq & V(x) - V(a+ R_+(\beta)) \\ & \geq & \frac{\beta}{R_+(2\beta)-R_+(\beta)} \,
(x-a-R_+(\beta)) \\ & \geq & \frac{\beta}{R_+(2\beta)} \, (x-a-R_+(\beta)) = \frac{c^+_{2
\beta}}{R_+(2\beta)} \, (x-a) - h_{2\beta}
\end{eqnarray*}
where $c^+_{2\beta}=\beta$ and $0\leq h_{2 \beta} \leq \beta$.

For $x \leq a - R_-(2\beta)$ we have a similar result replacing $R_+$ by $R_-$ hence with
$c_-(2 \beta)=\beta$ again. Since $R_+$ and $R_-$ are both smaller than (or equal to)
$R(\beta)$, $V$ is $2\beta$-superlinear and Lemma \ref{lem1} yields $$R^2(2\beta) \leq 12 \,
e^{\beta} \, \left(1 + \frac {2 e^{2\beta}}{\beta}\right) \, .$$ In addition for $x \geq a +
R_+(2\beta)$ convexity yields
$$V'(x) \geq \frac{\beta}{R_+(2\beta)} \geq \frac{\beta}{R(2\beta)} \geq \frac{\beta^{3/2} \, e^{-\beta/2}}{2 \sqrt 3 \, (\beta
+2 e^\beta)^{1/2}} \, ,$$ and the same result is true for $x \leq a - R_-(2\beta)$.
Proposition \ref{proppoincbeta} yields a bound for each $\beta$ ($\beta=1$ for example). We
should optimize in $\beta$ but as we said we shall never attain the optimal bound 12.
\smallskip

In the general case ($V$ convex with values in $]-\infty,+\infty]$) we first approximate $V$
by everywhere finite convex functions, and then approximate such a function by a smooth one
convoluing it for instance with gaussian kernels with small variance.
\end{proof}
\medskip

\subsection{Hardy type inequalities.\\}\label{subsecdelocal}

In the spirit of Remark \ref{remstokes} we can state another particular result of Hardy type,
which is known to hold (with a better constant) if $b$ below is a median of $\mu$
\begin{theorem}\label{thmavecunpoint}
Let $d\mu= e^{-V(x)} \, dx$ be a probability measure on the real line satisfying a Poincar\'e
inequality with constant $C_P$. We assume that there exist a sequence $V_n$ of $C^1$ functions
such that $e^{-V_n}$ converges to $e^{-V}$ weakly in $\sigma(\L^1,\L^\infty)$. Then for all
$b\in \R$ the following inequality holds for all bounded smooth $f$ ,
\begin{equation}\label{eqpoincsob}
\int \, (f(x)-f(b))^2 \, \mu(dx) \, \leq \, \frac{8 C_P}{\mu(]-\infty,b]) \wedge
\mu([b,+\infty[)} \, \int \, (f')^2(x) \, \mu(dx) \, .
\end{equation}
\end{theorem}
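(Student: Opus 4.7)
The plan is to apply the Poincar\'e inequality for $\mu$ not to $f$ itself but to one-sided truncations of $f - f(b)$ that already vanish at $b$, exploiting the one-dimensional feature that $b$ disconnects $\R$ so the two half-lines can be treated separately and then recombined.

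Concretely, I would introduce
\[
g_+(x) := (f(x)-f(b))\,\BBone_{x\geq b}, \qquad g_-(x) := (f(x)-f(b))\,\BBone_{x\leq b};
\]
both are Lipschitz (they vanish continuously at $b$) with weak derivatives $f'\BBone_{x>b}$ and $f'\BBone_{x<b}$ respectively. Setting $\mu^\pm := \mu([b,\pm\infty))$, $I_+ := \int_b^{+\infty}(f-f(b))^2\, d\mu$, and $J_+ := \int_b^{+\infty}(f-f(b))\, d\mu$, the Poincar\'e inequality applied to $g_+$ reads $I_+ - J_+^2 \leq C_P \int_b^{+\infty}(f')^2\, d\mu$, while a direct Cauchy--Schwarz gives $J_+^2 \leq \mu^+ I_+$. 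Together these produce $\mu^- I_+ \leq C_P \int_b^{+\infty}(f')^2\, d\mu$, and the symmetric computation applied to $g_-$ yields $\mu^+ I_- \leq C_P \int_{-\infty}^{b}(f')^2\, d\mu$. Summing the two one-sided bounds and replacing each denominator by $m := \mu^-\wedge\mu^+$ gives $\int(f-f(b))^2\,d\mu \leq (C_P/m)\int(f')^2\,d\mu$, which is in fact slightly sharper than the stated $8 C_P/m$.

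The only non-routine step, and thus the main obstacle, is justifying that $g_\pm$ are admissible test functions in the Poincar\'e inequality (H4), since they are Lipschitz but not $C^1$. This is precisely where the approximating sequence $V_n$ enters: one mollifies $g_+$ on scale $\varepsilon$ to obtain a $C^1$ function $\phi_\varepsilon$, applies (H4) for $\mu$ to $\phi_\varepsilon$, and passes $\varepsilon \to 0$ by dominated convergence, using the $\sigma(\L^1,\L^\infty)$ convergence of $e^{-V_n}$ to $e^{-V}$ to transport the computation to the regularized measures $\mu_n = Z_n^{-1}e^{-V_n}dx$ (where density of $C^1$ functions in $H^1(\mu_n)$ is automatic) and then return to $\mu$ in the limit. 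Beyond this approximation, the algebraic core of the argument is just one application of Poincar\'e plus one Cauchy--Schwarz.
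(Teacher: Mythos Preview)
Your argument is correct and in fact yields the sharper constant $C_P/m$ in place of $8C_P/m$. The route, however, is genuinely different from the paper's.

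The paper does \emph{not} apply the Poincar\'e inequality to a truncation of $f-f(b)$. Instead it uses the hitting-time machinery developed earlier: by Proposition~\ref{propentry}, for any $\theta<\mu(]-\infty,b])/8C_P$ the function $W(x)=\E_x(e^{\theta T_b})$ is a Lyapunov function solving $LW=-\theta W$ on $[b,+\infty)$; one then writes $\int_b^A(f-f(b))^2\,d\mu = -\theta^{-1}\int_b^A \frac{LW}{W}(f-f(b))^2\,d\mu$ and integrates by parts as in Remark~\ref{remstokes}. The boundary term at $b$ vanishes because $f(b)-f(b)=0$, and the boundary term at $A$ has the good sign because $W$ is non-decreasing. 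Letting $A\to\infty$ and then $\theta\uparrow\mu(]-\infty,b])/8C_P$ gives the stated bound. Your approach bypasses the Lyapunov function entirely: one Poincar\'e inequality applied to the one-sided cutoff plus one Cauchy--Schwarz does the job, and the factor $8$ never appears because you avoid the deviation estimate \eqref{eqQueueDeT}. What the paper's route buys is (i) a demonstration of the Lyapunov/hitting-time toolbox in action, which is the theme of the article, and (ii) an argument that is manifestly local on $[b,A]$, which feeds directly into Corollary~\ref{corenunpoint} on restrictions to sub-intervals; your argument uses the global Poincar\'e inequality for $\mu$ applied to $g_+$, so the passage to sub-intervals would require an extra step.

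One minor point: your discussion of the role of the approximating sequence $V_n$ is somewhat off target. Extending (H4) from smooth test functions to Lipschitz ones like $g_\pm$ is a routine mollification that needs no hypothesis on $V$ at all. In the paper the sequence $V_n$ is needed for a different reason: the proof there runs through the diffusion with generator $L=\partial_{xx}-V'\partial_x$, which requires $V\in C^1$, and the approximation is used only at the very end to lift the result to general $V$. In your approach the diffusion never appears, so you may simply drop the reference to $V_n$.
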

\begin{proof}
Assume first that $V$ is of class $C^1$. If $X_t$ is the diffusion process with generator $Lf
= f'' - V' \, f'$ (which is conservative since Poincar\'e inequality, hence (H1) holds),
Proposition \ref{propentry} tells us that for any $\theta < \mu(]-\infty,b]/ 8C_P$ the hitting
time of $]-\infty,b]$ has an exponential moment of order $\theta$. Hence one can find a
Lyapunov function satisfying $LW = - \theta \, W$ on $[b,+\infty[$, namely $W(x)= \mathbb
E_x(e^{\theta \, T_b})$. It follows that for a smooth $f$ and $A\geq b$,
\begin{eqnarray*}
\int_b^{A} \, (f(x)-f(b))^2 \, \mu(dx) & = & \frac{- 1}{\theta} \, \int_b^{A} \,
\frac{LW}{W}(x) \, (f(x)-f(b))^2 \, \mu(dx) \\ & \leq & \frac{1}{\theta} \, \left(\int_b^{A}
\, (f'(x))^2 \, \mu(dx) -  \left((f(A)-f(b))^2 \, \frac{W'(A)}{W(A)} \,
e^{-V(A)}\right)\right)
\end{eqnarray*}
the latter being obtained as in Remark \ref{remstokes} using integration by parts, since
$f(x)-f(b)=0$ for $x=b$. But $W$ is clearly non-decreasing in $x$ so that the last term into
braces is non-negative, yielding the bound we claimed on $[b,+\infty[$ by letting $A$ go to
$+\infty$. The same holds on the left hand side of $b$.

Hence the Hardy-Poincar\'e-Sobolev inequality \eqref{eqpoincsob} holds for any constant larger
than $$\frac{8 C_P}{\mu(]-\infty,b]) \wedge \mu([b,+\infty[)}$$ hence with this value by
taking the infimum, and then for a non-necessarily smooth $V$ using an approximation
procedure.
\end{proof}
\medskip

As it is clear in the previous proof we may replace the full $\R$ by any interval containing
$b$ without any change in the constant. Since the Variance of $f$ in restriction to an
interval minimizes the square distance to any value, we thus obtain as a corollary

\begin{corollary}\label{corenunpoint}
Let $d\mu= e^{-V(x)} \, dx$ a probability measure on the real line satisfying a Poincar\'e
inequality with constant $C_P$. Then for all interval $(a,b) \subseteq \R$ the following
inequality holds for $\mu_{(a,b)}$ the restriction of $\mu$ to $(a,b)$ and for all bounded
smooth $f$ ,
\begin{equation}\label{eqpoincsobbis}
\Var_{\mu_{(a,b)}}(f) \, \leq \, \frac{8 C_P}{\sup_{u \in (a,b)} \{\mu(]-\infty,u]) \wedge
\mu([u,+\infty[)\} }\, \int \, (f')^2(x) \, \mu_{(a,b)}(dx) \, .
\end{equation}
In particular if $a \leq m_\mu \leq b$ then $\mu_{(a,b)}$ satisfies a Poincar\'e inequality
with a constant not bigger than $16 \, C_P$.
\end{corollary}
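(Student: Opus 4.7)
The plan is to combine Theorem \ref{thmavecunpoint} with the variational characterization of the variance, essentially following the two sentences of motivation preceding the statement. First I observe that the proof of Theorem \ref{thmavecunpoint} actually works verbatim on any bounded subinterval containing the base point. Indeed, on $[b,A]$ the integration by parts produces
\[
\int_b^A (f-f(b))^2 \, \dmu \; \leq \; \frac{1}{\theta} \int_b^A (f')^2 \, \dmu \; - \; \frac{1}{\theta} \, (f(A)-f(b))^2 \, \frac{W'(A)}{W(A)} \, e^{-V(A)},
\]
and the last term is non-negative since $W(x) = \E_x[e^{\theta T_b}]$ is non-decreasing on $[b,+\infty[$; it can therefore simply be dropped. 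Carrying out the symmetric argument on the left and applying it with $b$ replaced by any $u \in (a,b)$, I obtain
\[
\int_a^b (f(x) - f(u))^2 \, d\mu(x) \; \leq \; \frac{8 \, C_P}{\mu(]-\infty,u]) \wedge \mu([u,+\infty[)} \int_a^b (f'(x))^2 \, d\mu(x).
\]

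Next I divide both sides by $\mu((a,b))$, replacing $d\mu$ by the normalized restriction $d\mu_{(a,b)}$ on both integrals. Since the variance is the minimum of $c \mapsto \int (f-c)^2 \, d\mu_{(a,b)}$ over constants $c$, I choose $c = f(u)$, which yields
\[
\Var_{\mu_{(a,b)}}(f) \; \leq \; \int (f - f(u))^2 \, d\mu_{(a,b)} \; \leq \; \frac{8 \, C_P}{\mu(]-\infty,u]) \wedge \mu([u,+\infty[)} \int (f')^2 \, d\mu_{(a,b)}.
\]
Taking the infimum of the right-hand side over $u \in (a,b)$ (equivalently, the supremum of the denominator) proves \eqref{eqpoincsobbis}.

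For the particular case $a \leq m_\mu \leq b$, the choice $u = m_\mu$ is admissible, and by definition of the median $\mu(]-\infty,m_\mu]) \wedge \mu([m_\mu,+\infty[) \geq 1/2$, so the constant in \eqref{eqpoincsobbis} is at most $16 \, C_P$. The only genuinely new ingredient is the localization observation made in the first paragraph; the remainder is elementary bookkeeping, so I do not expect a real obstacle here.
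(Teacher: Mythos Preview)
Your proof is correct and follows exactly the approach sketched in the paper: localize the integration-by-parts argument of Theorem~\ref{thmavecunpoint} to the subinterval $(a,b)$ (dropping the boundary term since $W'/W \geq 0$), then use the variational characterization of the variance and optimize over the base point $u$. The paper's own proof is nothing more than the two-sentence remark preceding the statement, which you have faithfully expanded.
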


This bound can certainly be attained and improved by looking carefully at Muckenhoupt type
constants, at least when the median belongs to the interval.
\medskip

\subsection{$\L^1$ inequalities.\\}\label{subsecl11}

It is well known that one obtains a stronger inequality replacing the $\L^2$ norm by a $\L^p$
norm for $1\leq p \leq 2$ (see e.g. \cite{bobkov-zeg} chapter 2). The $\L^1$ Poincar\'e
inequality (sometimes called Cheeger inequality) is of particular interest since it yields
controls for the isoperimetric constant (see e.g. \cite{BH97,bob99}). Due to the standard
\begin{equation}\label{eqmedvar1}
\frac 12 \, \int |f -\mu(f)| \, d\mu \leq \int |f -m_\mu(f)| \, d\mu \leq \int |f -\mu(f)| \,
d\mu \, ,
\end{equation}
where $\mu(f)$ and $m_\mu(f)$ denote respectively the mean and a $\mu$ median of $f$, such an
inequality can be written indifferently
\begin{equation}\label{eqcheeg}
\int |f -\mu(f)| \, d\mu \leq C_C \, \int \, |\nabla f| \, d\mu \quad \textrm{ or } \quad \int
|f -m_\mu(f)| \, d\mu \leq C'_C \, \int \, |\nabla f| \, d\mu \, .
\end{equation}

\eqref{eqcheeg} is true for any log-concave distribution and actually $C_C$ and $C_P$ differ
by an universal multiplicative constant (see \cite{ledgap}). For one dimensional log-concave
distribution $C_C$ is universally bounded (see \cite{BH97}). In our previous paper
(\cite{BBCG}) we have shown that the existence of a Lyapunov function $W$ as in (H1) implies a
Cheeger type inequality, provided $\nabla W/W$ is bounded.
\smallskip

We shall here derive such an inequality, with the correct normalization factor $\mu(|x
-\mu(x)|)$ which immediately follows by a linear change of variables in \eqref{eqcheeg}.

\begin{theorem}\label{thmcheeger1}
Under the hypotheses of Proposition \ref{proppoincbeta} there exists a constant
$C(\beta,\lambda,\mu(|x -\mu(x)|))$  such that the Cheeger constant $C_C(\mu)$ satisfies
$$C_C(\mu) \leq C(\beta,\lambda,\mu(|x -\mu(x)|))  \, .$$ In particular if $\mu$ is a log-concave
probability measure on the line, there exists an universal constant $C$ such that $C_C(\mu)
\leq C \, \mu(|x-\mu(x)|)$.
\end{theorem}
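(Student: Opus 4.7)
The strategy is to replicate the scheme of Proposition \ref{proppoincbeta} and Theorem \ref{thm1d} but with the $L^{2}$ Poincar\'e inequality replaced by its $L^{1}$ (Cheeger) counterpart. After the same rescaling and centering as in Remark \ref{remnorm}, which reduces the problem to a probability measure with $\mu(|x-\mu(x)|)=1$ and $\int x\,d\mu=0$, I would reuse verbatim the Lyapunov function
\[
W_{\beta}(x)=e^{\gamma u(x-a_{\beta})},\qquad \gamma=\theta/2,
\]
built in the proof of Proposition \ref{proppoincbeta}, together with its modified measure $\mu_{\beta}$ defined in \eqref{eqmumodif}. The crucial extra property to notice is that $u$ is $C^{2}$ with $|u'|\leq 1$, so
\[
\left|\frac{W_{\beta}'}{W_{\beta}}\right|=\gamma\,|u'(x-a_{\beta})|\leq\gamma,
\]
i.e.\ $|\nabla W_{\beta}|/W_{\beta}$ is uniformly bounded, which is precisely the extra ingredient needed to pass from a Poincar\'e to a Cheeger bound in \cite{BBCG}.

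With $W_{\beta}$ in hand, I would apply the Lyapunov-to-Cheeger implication of \cite{BBCG}: if $W\geq 1$ satisfies $-LW/W\geq\lambda$ on $N_{\beta}^{c}$ and $|W'|/W\leq K$, then truncating $|f-m|$ with a smooth cutoff $\chi$ supported in $N_{2\beta}$ (as in \eqref{eqpoinclyap}) and using the sign-function integration by parts yields
\[
\int |f-m|\,d\mu_{\beta}\,\leq\,\frac{2K}{\lambda}\int|f'|\,d\mu_{\beta}\,+\,B(\beta)\int_{N_{2\beta}}|f-m|\,d\mu_{\beta},
\]
for $m$ any constant (in particular a median of $f$ under $\mu_{\beta}$). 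Choosing $m$ to be the median, the restriction of $\mu_{\beta}$ to $N_{2\beta}$ is proportional to Lebesgue measure on an interval of length $R(2\beta)$, which satisfies a $L^{1}$-Poincar\'e inequality whose constant is of order $R(2\beta)$; Lemma \ref{lem1} bounds this by a quantity depending only on $\beta$ and $c_{\beta},h_{\beta}$. Combining the two estimates gives a Cheeger inequality for $\mu_{\beta}$ whose constant depends only on $\beta$ and $\theta$.

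To transfer the inequality back to $\mu$ I would use the two-sided comparison \eqref{eqcomparemodif}, which costs only a multiplicative factor depending on $\beta$, $c_{\beta}$ and $h_{\beta}$, producing the announced bound $C_{C}(\mu)\leq C(\beta,\theta,\mu(|x-\mu(x)|))$, the $\mu(|x-\mu(x)|)$ dependence entering only through the scaling at the start. For the log-concave case, I would again specialize to $\beta=1$ as in Theorem \ref{thm1d}: convexity of $V$ forces $N_{\beta}$ to be an interval, and the computations following \eqref{eqvar2} (with $c_{2\beta}=\beta$, $h_{2\beta}\leq\beta$) make $\theta$, $c_{\beta}$, $h_{\beta}$ all universal, so the constant $C$ becomes universal and we recover the $L^{1}$ analogue $C_{C}(\mu)\leq C\,\mu(|x-\mu(x)|)$ of Bobkov's bound \eqref{eqbob1}.

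The main obstacle, and the only step that truly requires care, is the $L^{1}$ substitute for the quadratic identity \eqref{eqipp}: one cannot plug $|f-m|$ directly into that computation, and one must instead carry out the sign-function integration by parts, which is where the boundedness of $|W_{\beta}'|/W_{\beta}$ is essential. Tracking the constants through this step and through the comparison $\mu_{\beta}\leftrightarrow\mu$ is what fixes the final numerical dependence, and it is the only computation that would need to be written out in detail. Everything else reduces to repeating the construction of Proposition \ref{proppoincbeta} without modification.
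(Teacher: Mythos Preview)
Your plan follows the paper's strategy closely: same modified measure $\mu_\beta$, same Lyapunov function $W_\beta$, same observation that $|W_\beta'|/W_\beta$ is bounded (which is exactly what the $L^1$ integration by parts needs), same transfer back to $\mu$ via \eqref{eqcomparemodif}, and the same specialization to log-concave $V$. The core mechanism is right.

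There is one genuine gap. You invoke Lemma \ref{lem1} to bound $R(2\beta)$, but Lemma \ref{lem1} controls $R^2$ in terms of $\Var_\mu(x)$, and you have normalised so that $\mu(|x-\mu(x)|)=1$, not $\Var_\mu(x)=1$. Cauchy--Schwarz only gives $\Var_\mu(x)\geq 1$ after this normalisation, which is the wrong direction: you need an \emph{upper} bound on $R$ in terms of $s:=\mu(|x-\mu(x)|)$. The paper handles this explicitly by first proving an $L^1$ analogue of Lemma \ref{lem1}, namely
\[
R_+\vee R_-\;\leq\;2\Big(1+\tfrac{2e^{h}}{c}\Big)\,s
\qquad\text{and}\qquad
R_+\vee R_-\;\geq\;\tfrac12\,e^{-\beta}\,C(h,c)\,s,
\]
obtained by mimicking the proof of Lemma \ref{lem1} with $\int |x-a|\,d\mu$ in place of $\int (x-a)^2\,d\mu$. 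This step is easy but not free; you should state and prove it rather than cite Lemma \ref{lem1}.

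Two minor points. First, $\mu_\beta$ restricted to $N_{2\beta}$ is \emph{not} proportional to Lebesgue measure (only its restriction to $N_\beta$ is); the paper avoids this by using the form $L_\beta W_\beta\leq -\lambda W_\beta + b\,\BBone_{N_\beta}$ directly, so the local term lives on $N_\beta$ where the Cheeger inequality for the uniform measure on an interval applies cleanly. Your cutoff variant with $N_{2\beta}$ can be made to work via a bounded perturbation, but it is slightly less clean. Second, the paper chooses $m$ to be the (normalised) mean of $f$ on $N_\beta$, not a median of $f$ under $\mu_\beta$; either choice works, but the local $L^1$ inequality must be matched to that choice.
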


\begin{proof}
We follow the proof of Proposition \ref{proppoincbeta} (see the notations therein) proving a
Cheeger inequality for the measure $\mu_\beta$. Recall that $W_\beta$ satisfies $L_\beta
W_\beta \leq -\lambda^2/4 \, W_\beta + b(R,\beta,\lambda) \BBone_{N_\beta}$.
\smallskip

The first thing to do is to show that $R_+\vee R_-$ is controlled, from above and from below
by a quantity depending only on $\beta$, $\lambda$ and $\mu(|x-\mu(x)|)$ i.e. to prove the
analogue of Lemma \ref{lem1}. Denote by $s$ the quantity $\mu(|x-\mu(x)|)$. Then mimiking the
proof of Lemma \ref{lem1} we can prove $$R_+\vee R_- \leq 2 (1 + \frac{2e^h}{c}) \, s \, ,$$
and $$R_+\vee R_- \geq \frac 12 \, s \, e^{-\beta} \, C(h,c) \, ,$$ for some $C(h,c)>0$.
\smallskip

Now we may assume that $s=1$. The second thing to do is to recall the reasoning in \cite{BBCG}
i.e. if $f$ is smooth and $g=f-m$ for some constant $m$ we have
\begin{eqnarray*}
\int |g| \, \mu_\beta(dx) & \leq & \frac{4}{\lambda^2} \, \int |g| \left(- \, \frac{L_\beta
W_\beta}{W_\beta}\right) \, \mu_\beta(dx) + \frac{4 b(\beta,\lambda) e^\beta}{\lambda^2 \,
Z_\beta} \, \int_{N_\beta} \, |g| \, dx \\ & \leq & \frac{4}{\lambda^2} \, \int \left(|g'|
\left( \frac{|W'_\beta|}{W_\beta}\right) - |g|
\left(\frac{|W'_\beta|^2}{W^2_\beta}\right)\right) \, \mu_\beta(dx) + \frac{4 b(\beta,\lambda)
e^\beta}{\lambda^2 \, Z_\beta} \, \frac{R}{\pi} \, \int_{N_\beta} \, |g'| \, dx
\end{eqnarray*}
if we choose $m=\int_{N_\beta} \, f(x) \, dx$. The first term is obtained after integrating by
parts, the second one is using the standard Cheeger inequality for the uniform measure on an
interval.

Now remark that $|W'_\beta|/W_\beta$ is bounded by some constant depending only on $\beta$ and
$\lambda$. Finally we have obtained (if $\mu(|x-\mu(x)|)=1$), $$\int |f-\mu_\beta(f)|
d\mu_\beta \leq 2 \, \int |f-m_{\mu_\beta}(f)| d\mu_\beta \leq 2 \, \int |g| \, d\mu_\beta
\leq K(\beta,\lambda) \, \int \, |f'| \, d\mu_\beta  \, ,$$ hence the result for $\mu_\beta$
and then for $\mu$ as in Proposition \ref{proppoincbeta}.

The log-concave case is then similar to Theorem \ref{thm1d}.
\end{proof}
\medskip

As we already said the previous Theorem contains Proposition \ref{proppoincbeta} thanks to
Cheeger's inequality $C_P \leq \, 4 \, C_C^2$. Actually our proof yields so bad constants in
both cases that it is really tedious to check when the previous relation gives a better bound
than Proposition \ref{proppoincbeta}. We also insist on the proof of both properties using
Lyapunov function. As we have seen, the proof of a $\L^1$ inequality requires the boundedness
of $W'/W$. In particular if we choose for $W$ the Laplace transform of hitting times
$\E_x(e^{\theta \, T_b})$, this latter property is not ensured. So we cannot obtain similar
results as in subsection \ref{subsecdelocal}.

\bigskip

\section{$\phi$ moments and Poincar\'e like inequalities.}\label{secweak}

Since the status of the existence of exponential moments for hitting times is now characterized
through the results of section \ref{secexit}, it is certainly interesting to look at more general
$\phi$ moments. The first result is a direct consequence of \eqref{eqipp}:

\begin{proposition}\label{propexpbest}
Assume that $L$ is uniformly strongly hypoelliptic. If $U$ is an open connected set with
$\mu(U)<1$, then $$\sup \left\{\lambda  \, , \textrm{ such that } \, \E_\mu\left(e^{\lambda
T_U}\right)<+\infty\right\} \, < \, +\infty \, .$$ In particular if $\phi$ growths faster, at
infinity, that any exponential $\E_\mu\left(\phi(T_U)\right) = +\infty$ .
\end{proposition}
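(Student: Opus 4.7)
The plan is to use \eqref{eqipp} together with a Lyapunov function built from $T_U$ itself. Assume $\E_\mu(e^{\lambda T_U}) < +\infty$ for some $\lambda > 0$. Then $W_\lambda(x) := \E_x(e^{\lambda T_U})$ is $\mu$-a.e.\ finite, and the analysis of the Dirichlet problem $LW + \lambda W = 0$ on $U^c$ already used in the proof of Theorem \ref{thmmaindiff} (using uniform strong hypoellipticity, see also \cite{cathypo2}) shows that $W_\lambda$ is in fact everywhere finite, positive, and smooth on $\overline{U^c}$, with
\[ LW_\lambda + \lambda W_\lambda = 0 \quad \text{on } U^c.\]

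Next I would fix a non-zero smooth $f$ with compact support inside the interior of $U^c$; such an $f$ exists because $\mu(U) < 1$ and $\mu$ has a positive smooth density (uniform strong hypoellipticity), so $\operatorname{int}(U^c)$ is non-empty for the open connected $U$ of interest (for instance $U$ bounded, so $U^c$ is unbounded). Applying \eqref{eqipp} to $W = W_\lambda$, and noting that $-LW_\lambda/W_\lambda \equiv \lambda$ on $\operatorname{supp}(f) \subset \operatorname{int}(U^c)$, the left-hand side collapses to $\lambda \int f^2 \dmu$, yielding
\[ \lambda \int f^2 \dmu \leq \int \Gamma(f,f) \dmu, \qquad \text{i.e.} \qquad \lambda \leq R(f) := \frac{\int \Gamma(f,f) \dmu}{\int f^2 \dmu}.\]
Since $R(f)$ does not depend on $\lambda$, taking the supremum over all admissible $\lambda$ gives $\sup\{\lambda : \E_\mu(e^{\lambda T_U}) < +\infty\} \leq R(f) < +\infty$, which is the first assertion.

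For the $\phi$-moment statement, pick any $K$ strictly larger than this supremum, so that $\E_\mu(e^{KT_U}) = +\infty$. The hypothesis that $\phi$ grows faster than every exponential furnishes $t_K$ with $\phi(t) \geq e^{Kt}$ for all $t \geq t_K$, and therefore
\[ \E_\mu(\phi(T_U)) \geq \E_\mu\left(e^{KT_U}\BBone_{T_U \geq t_K}\right) \geq \E_\mu(e^{KT_U}) - e^{Kt_K} = +\infty.\]
The only delicate point is the smoothness of $W_\lambda$ in a neighborhood of $\operatorname{supp}(f)$ (so that the integration by parts underlying \eqref{eqipp} is legitimate); this regularity is precisely what uniform strong hypoellipticity delivers on the open set $\operatorname{int}(U^c)$. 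Once granted, the identity $-LW_\lambda/W_\lambda = \lambda$ turns \eqref{eqipp} into a one-line bound on $\lambda$ uniform in $\lambda$, and the rest is elementary.
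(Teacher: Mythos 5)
Your approach is essentially the paper's: exploit the fact that $W_\lambda=\E_\cdot(e^{\lambda T_U})$ solves $LW_\lambda=-\lambda W_\lambda$ on $U^c$ and feed it into \eqref{eqipp} with $f$ supported in $\operatorname{int}(U^c)$, obtaining the $\lambda$-independent Rayleigh-quotient bound $\lambda\le \int\Gamma(f,f)\dmu / \int f^2\dmu$. The only step where you are slightly optimistic is the passage from ``$\E_\mu(e^{\lambda T_U})<\infty$'' to ``$W_\lambda$ everywhere finite, smooth, and satisfying the Dirichlet problem.'' The paper's own route from (H2$\mu$) to (H2) (via the $\L^2(\mu)$ density of the time-$1$ marginal and the Markov property) halves the exponent: $\E_\mu(e^{\theta T_U})<\infty$ only yields $\E_x(e^{(\theta/2)T_U})<\infty$ for every $x$, and it is $W_{\theta/2}$, not $W_\theta$, that is then shown to be the smooth Lyapunov function. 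You assert the equality of exponents directly from hypoellipticity, which is not what the paper's cited lemma provides; as stated this is a gap, though a harmless one — running your argument with $W_{\lambda/2}$ in place of $W_\lambda$ gives $\sup\{\lambda:\E_\mu(e^{\lambda T_U})<\infty\}\le 2R(f)$, and finiteness of the supremum is all that is claimed. Otherwise the reasoning (choice of a nonzero test function in $\operatorname{int}(U^c)$, available since $\mu(U)<1$ and $\mu$ has a positive density; and the elementary tail argument for the $\phi$-moment statement, which the paper leaves implicit) is correct and well presented.
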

\begin{proof}
We already saw that, in the uniform strong hypoelliptic situation, $\E_x\left(e^{\lambda
T_U}\right) < +\infty$ for all $x$ as soon as $\E_\mu\left(e^{2 \lambda T_U}\right)<+\infty$.
According to the proof of Theorem \ref{thmmaindiff}, we thus know that there exists a Lyapunov
function satisfying (H1). \eqref{eqipp} implies that $$\int f^2 \dmu \leq \frac{1}{\lambda} \,
\int \, \Gamma(f,f) \dmu$$ for all smooth $f$ with support in $\bar{U}^c$. This cannot hold for
all $\lambda$ since $\mu(U)<1$, just looking at $\lambda \to +\infty$.
\end{proof}

This result is in accordance with the fact that one cannot improve on the exponential convergence
in $\L^2$ (or in total variation distance) even for very strong repelling forces. In order to
discriminate diffusions satisfying a Poincar\'e inequality, one has to introduce new inequalities
(e.g. $F$-Sobolev inequalities or super-Poincar\'e inequalities) or contraction properties of the
semi-group (see e.g. \cite{BCR1,CGWW}). Another connected possibility is to look at exponential
decay to equilibrium for weaker norms than $\L^p$ norms (see e.g. \cite{CatGui3}). The certainly
best known case is the one when a logarithmic Sobolev inequality holds or equivalently the
semi-group is hypercontractive or equivalently exponential convergence holds in entropy (or in $\L
\log \L$ Orlicz norm) (see e.g. \cite{logsob} for an elementary introduction to all these
notions).

The use of Lyapunov functions for studying such stronger inequalities is detailed in \cite{CGWW}.
It should be very interesting to understand these phenomena in terms of hitting times. We strongly
suspect that what is important is the behavior of $W(x)= \E_x\left(e^{\lambda T_U}\right)$ as $x$
goes to infinity. For instance if $W$ is bounded, we suspect that the semi-group is
ultracontractive (or more properly ultrabounded). Some hints in this direction are contained in
\cite{Cat07} Theorem 7.3 at least for diffusion processes on the real line. Let us state a result
in this direction:

\begin{proposition}\label{ultra}
Assume that $L=\Delta - \nabla V.\nabla$, where $V$ is smooth, is defined on $\R$.
$\mu(dx)=e^{-V(x)} dx$ (supposed to be a probability measure) is then symmetric for $L$. Assume in
addition that $|\nabla V|^2 - \Delta V \geq -C$ for some non-negative constant $C$.

Then there is an equivalence between
\begin{enumerate}
\item  the associated semi-group $P_t$ is ultrabounded (i.e. $P_t$ maps continuously
$\L^1(\mu)$ in $\L^\infty(\mu)$ for all $t>0$), and there exists an open interval $U$ such
that for all $x\in \mathbb R$, $P_x(T_U<+\infty)=1$, \item  there exists a bounded Lyapunov
function $W$, \item there exists an open interval $U$ and $\lambda>0$ such that
$$\sup_x \, \E_x\left(e^{\lambda \, T_U}\right) \, < \, +\infty \, .$$
\end{enumerate}
\end{proposition}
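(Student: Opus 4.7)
\textbf{Plan for Proposition \ref{ultra}.}

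My plan is to prove the three implications (2)$\Leftrightarrow$(3), (1)$\Rightarrow$(3), and (2)$\Rightarrow$(1), reusing as much of the Theorem \ref{thmmaindiff} machinery as possible.

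\emph{Step 1: (2)$\Leftrightarrow$(3).} This is essentially a re-reading of the (H1)$\Leftrightarrow$(H2) argument with the extra track of $\nrm{W}_\infty$. For (3)$\Rightarrow$(2) I would set $W(x)=\E_x(e^{\lambda T_U})$. By the Dirichlet problem characterization invoked in the proof of (H2)$\Rightarrow$(H1) (the operator $L$ is elliptic on $\R$, so \cite{cathypo2} Theorem~5.14 applies), $W$ is smooth on $\bar U^c$ and satisfies $LW+\lambda W=0$ there, hence $LW=-\lambda W$; a Seeley-type smooth extension inside $U$ makes $W$ globally smooth, and $W$ is bounded by assumption. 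Conversely, (2)$\Rightarrow$(3) follows from the Ito argument already used for (H1)$\Rightarrow$(H2): Ito's formula applied to $e^{\lambda t}W(X_t)$ with the localization $T_U\wedge T_{U_R^c}$ yields $\E_x(e^{\lambda(t\wedge T_{UR})})\le W(x)$, and monotone convergence gives $\E_x(e^{\lambda T_U})\le W(x)\le \nrm{W}_\infty$ uniformly in $x$.

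\emph{Step 2: (1)$\Rightarrow$(3).} Ultraboundedness is well known to imply a Poincar\'e inequality (see \cite{CGWW}), so Theorem \ref{thmmaindiff} yields (H2$\mu$): there is $\alpha>0$ with $\P_\mu(T_U>t)\le C e^{-\alpha t}$. To upgrade this to a bound uniform in $x$, I use the Markov property at time $1$:
\[
\P_x(T_U>t)\le \P_x\bigl(T_U\circ\theta_1>t-1\bigr)=\int p_1(x,y)\,\P_y(T_U>t-1)\,\mu(dy),
\]
where $p_1(x,\cdot)$ is the density of the law of $X_1$ under $\P_x$ w.r.t.\ $\mu$. Ultraboundedness gives $\sup_{x,y}p_1(x,y)\le M<\infty$, hence $\sup_x\P_x(T_U>t)\le MC e^{-\alpha(t-1)}$. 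Lemma \ref{lmm=lemmeTrivial} then provides a uniform exponential moment (the additional hypothesis $\P_x(T_U<\infty)=1$ guarantees that $T_U$ is finite, which is what allows the Markov step to be meaningful in the one-dimensional non-compact setting where recurrence cannot be assumed a priori).

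\emph{Step 3: (2)$\Rightarrow$(1), the main obstacle.} This is where one really needs $n=1$ and the curvature-like bound $|\nabla V|^2-\Delta V\ge -C$. A bounded Lyapunov function forces strong growth of $V$ at $\pm\infty$: by inspecting $W(x)=\E_x(e^{\lambda T_U})$ in dimension one (where it can be written explicitly as an integral against the scale and speed measures), boundedness of $W$ translates into an integrability statement on $e^{-V}$ that is stronger than a bare Poincar\'e tail. The curvature-type assumption controls $L(e^{V})$ and $L(e^{V/2})$ and hence the Girsanov weight that already appeared in the proof of Proposition \ref{propentry}; combining the two, I would derive a super-Poincar\'e (or equivalently $F$-Sobolev / Nash-type) inequality sufficient for ultraboundedness. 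Concretely, the plan is to follow \cite{Cat07} Theorem 7.3: in one dimension ultraboundedness of $P_t$ is characterized by a weighted Muckenhoupt-type condition on the pair (speed, scale), and a bounded $W$ together with the Bakry--Emery-like lower bound implements exactly that Muckenhoupt condition. The main obstacle is making this last step quantitative; once the Muckenhoupt / $F$-Sobolev condition is established, ultraboundedness (and hence (1)) is standard, and the condition $\P_x(T_U<\infty)=1$ is automatic because, having (2), the function $W$ is finite everywhere and $T_U$ inherits a finite exponential moment from Step 1.
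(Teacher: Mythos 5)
Your Step 1 is the same argument as the paper (which simply invokes the (H1)$\Leftrightarrow$(H2) proof of Theorem~\ref{thmmaindiff}, noting $L$ is uniformly elliptic), and your Step 2 is a genuinely different and more elementary route: the paper goes through quasi-limiting distributions (Appendix~B of \cite{CM} combined with \cite{Cat07}~Theorem~7.3), whereas you observe that ultraboundedness gives both a Poincar\'e inequality (hence $\P_\mu(T_U>t)\le Ce^{-\alpha t}$ by (H4)$\Rightarrow$(H2$\mu$)) and a uniform bound on the kernel $p_1(x,y)$, and then use $T_U\le 1+T_U\circ\theta_1$ and the Markov property to upgrade the $\mu$-bound to a uniform-in-$x$ bound. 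This is cleaner and avoids the quasi-stationary machinery, while still using the extra hypothesis $\P_x(T_U<\infty)=1$ exactly where it is needed.

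Step 3 is where the proposal falls short of a proof. You correctly identify the strategy — transfer (2)/(3) through \cite{Cat07}~Theorem~7.3 into an integrability/Muckenhoupt-type condition and then upgrade to an $F$-Sobolev inequality implying ultraboundedness — but you leave it as a plan with an acknowledged ``main obstacle''. What is missing is precisely the concrete step the paper supplies: from (3), \cite{Cat07}~Theorem~7.3 gives $\int_a^{\infty}e^{-V(y)}\int_a^y e^{V(z)}\,dz\,dy<\infty$ with $a=\sup U$; one then sets $u(x)=\mu([x,\infty))$ and
\[
F(z)=z\Bigl(\int_a^{u^{-1}(1/z)}e^{V(y)}\,dy\Bigr)^{-1},
\]
so that $z\mapsto F(z)/z$ is non-increasing and $u(x)F(1/u(x))\int_a^x e^{V}=1$. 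By \cite{BCR1} (see also Remark~3.3 in \cite{CatGui3}) this implies an $\tilde F$-Sobolev inequality, and the \cite{Cat07} integrability condition translates into $\int^{\infty}\frac{du}{uF(u)}<\infty$, which by \cite{Wbook}/\cite{CatGui3}~p.135 yields ultraboundedness. Without writing this down you have a correct roadmap but not a proof of (2)$\Rightarrow$(1). A minor remark: you suggest the hypothesis $|\nabla V|^2-\Delta V\ge -C$ enters via a Girsanov weight as in Proposition~\ref{propentry}; in the paper this hypothesis is not used that way for this implication — it is a regularity/non-explosion assumption needed for the cited quasi-stationarity results, not a quantitative ingredient of the $F$-Sobolev construction.
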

\begin{proof}
The equivalence between (2) and (3) follows from the proof of Theorem \ref{thmmaindiff}, since $L$
is uniformly elliptic.

If (1) holds, it follows from the arguments in Appendix B of \cite{CM}, that there exists an
unique quasi limiting distribution for the process (starting from the right of $U$) killed
when hitting any interior point of $U$. For all definitions connected with quasi-stationary
measures and quasi-limiting distributions we refer to \cite{Cat07,CM}. The same holds for the
process coming from the left of $U$. According to \cite{Cat07} Theorem 7.3, this implies that
the killed process ``comes down from infinity'' i.e. satisfies (3).

Conversely, \cite{Cat07} Theorem 7.3 tells us that (3) implies the condition (called (H5) therein)
$$\int_a^{+\infty} \, e^{-V(y)} \, \int_a^y \, e^{V(z)} \, dz \, dy \, < \, +\infty \, ,$$ for
$a=\sup U$. Define, for $x>a$,  $u(x)=\mu([x,+\infty[)$ and $$F(z) = z \,
\left(\int_a^{u^{-1}(1/z)} \, e^{V(y)} \, dy\right)^{-1} \, .$$ $z \mapsto F(z)/z$ is thus
non-increasing and we have
\begin{equation}\label{eqhardy}
u(x) \, F\left(\frac{1}{u(x)}\right) \, \int_a^x \, e^{V(y)} \, dy \, = \, 1 \, .
\end{equation}
According to results in \cite{BCR1} (see Remark 3.3 in \cite{CatGui3}), $\mu$ satisfies a
$\tilde{F}$-Sobolev inequality for a slight modification of $F$. Condition (H5) of \cite{Cat07}
recalled above implies that
$$\int^{+\infty} \frac{1}{u \, F(u)} \, du \, < +\infty \, .$$ The same holds with $\tilde{F}$ in
place of $F$. According to a result of \cite{Wbook} explained p.135 of  \cite{CatGui3}, this
implies that the semi-group is ultrabounded.
\end{proof}
\medskip

\subsection{Weak Poincar\'e inequalities and polynomials moments.}\label{subsecweak}

In this section we shall look at the existence of $\phi$-moments for functions $\phi$ growing
slower at infinity than an exponential, and actually we shall mainly focus on power functions. In
all the discussion below we shall assume, for simplicity, that $L$ is uniformly strongly
hypoelliptic and our symmetry assumption.
\smallskip

First of all recall that under our assumptions, defining for $q\in \N$,
\begin{equation}\label{eqmomq}
v_q(x) = \E_x \left(T_U^q\right)
\end{equation}
and provided $v_q$ is well defined for all $x$, $v_q$ is smooth and satisfies for $q\geq 1$
\begin{equation}\label{eqalmostlyap}
Lv_q(x) = - q \, v_{q-1}(x) \quad \textrm{ for all } x\in U^c \, ,
\end{equation}
as a simple application of the Markov property. We thus have some ``nested'' Lyapunov functions.

Henceforth we assume that $U$ is bounded (which is clearly not a restriction). Then, since $v_q(x)
>0$ when $d(x,U)\geq 1$, the Markov property together with the continuity of $v_q$ and the
compactness of $\{d(x,U)=1\}$, show that there exists $\kappa>0$ such that for all $x$ with
$d(x,U)\geq 1$, $v_q(x) \geq \kappa$ and $v_{q-1}(x)\geq\kappa$. Remark that equality
(\ref{eqalmostlyap}) is still true  for all $x$ such that $d(x,U)\ge 1$. Note that $v_{q-1}\le
v_q$ and we set $v_{q-1}(x)/v_q(x)=1$ if $v_q(x)=0$. We then obtain the following consequences

\begin{proposition}\label{propzarbi}
\begin{itemize}
\item[(1)] Weak Poincar\'e like inequalities.\\
Assume that a local Poincar\'e inequality holds. Suppose that $v_q(x)$ is finite for all $x$.
Then for all positive $s<1$,  there exists a positive function $\beta$ such that for all
bounded $f$
\begin{equation}
\Var_\mu(f)\le \beta(s)\int\Gamma(f,f)d\mu+s \Osc(f)^2
\end{equation}
and $\beta(s)=C\,\left(\inf\{u\,;\,\mu(\frac{v_{q-1}}{v_q}<u)>s\}\right)^{-1}$ for some explicit constant $C$.
\item[(2)] \quad Assume that $L=\Delta - \nabla V.\nabla$, where $V$ is smooth, is defined on
$\R$. Then, if $v_1$ is bounded, the associated semi-group $P_t$ is ultrabounded (hence for some
$\lambda>0$, $\sup_x \E_x(e^{\lambda \, T_U} < +\infty$).
 \item[(3)] \quad If there exists $C$
such that $v_q(x) \leq C \, v_{q-1}(x)$ for all $x$ with $d(x,U)\geq 1$, then $\mu$ satisfies a
Poincar\'e inequality (and consequently $T_U$ has some exponential moment for all $\P_x$).
\end{itemize}
\end{proposition}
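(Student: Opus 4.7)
The common core of all three statements is the ``almost Lyapunov'' identity \eqref{eqalmostlyap}: $Lv_q = -q\,v_{q-1}$ on $\{d(x,U)\ge 1\}$, combined with the lower bound $v_{q-1},v_q \ge \kappa$ on that set noted just before the proposition. Thus $v_q$ plays the role of a Lyapunov function with a \emph{variable} decay rate $-Lv_q/v_q = q\,v_{q-1}/v_q$, and each part of the proposition amounts to reading this variable-rate identity in the appropriate way.

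Parts (2) and (3) are then direct. For (3), the hypothesis $v_q \le C v_{q-1}$ on $\{d(x,U)\ge 1\}$ turns the variable rate into a genuine one: $Lv_q \le -(q/C)\,v_q$. Setting $W = 1 + v_q \ge 1$ and using $v_q \ge \kappa$, one checks that $LW \le -\lambda W$ on $\{d(x,U)\ge 1\}$ for an explicit $\lambda > 0$, so (H1) holds with the open bounded set $\{d(x,U)<1\}$ and Theorem \ref{thmmaindiff} yields the Poincar\'e inequality; exponential moments of $T_U$ under each $\P_x$ follow from Proposition \ref{propentry}. For (2), the boundedness $v_1\le M$ together with $Lv_1 = -1$ on $\{d(x,U)\ge 1\}$ gives $L(1+v_1) = -1 \le -(1/(M+1))(1+v_1)$ there, so $W = 1 + v_1$ is a \emph{bounded} Lyapunov function and Proposition \ref{ultra} yields ultraboundedness of $P_t$.

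Part (1) requires genuine work: we rerun the proof of (H1) $\Rightarrow$ (H4) from Section \ref{secexit}, but keep the non-constant weight $v_{q-1}/v_q$. Pick a smooth cutoff $\chi$ with $\BBone_U \le \chi \le \BBone_{U_1}$ where $U_1 = \{d(x,U) < 1\}$, and let $m$ be the midrange of $f$, so $(f-m)^2 \le \Osc(f)^2/4$. Apply \eqref{eqipp} with $W = v_q$ to the test function $g = (f-m)(1-\chi)$ (which vanishes on $U$, so the singularity of $v_q$ on $\partial U$ is harmless), and expand $\Gamma(g,g) \le 2(1-\chi)^2\Gamma(f,f) + 2(f-m)^2\Gamma(\chi,\chi)$ to obtain
\[
q \int \frac{v_{q-1}}{v_q}\,(f-m)^2(1-\chi)^2 \dmu \,\le\, 2\int \Gamma(f,f) \dmu + 2\nrm{\Gamma(\chi,\chi)}_\infty \int_{U_1}(f-m)^2 \dmu.
\]
Now for $u>0$ set $A_u = \{v_{q-1}/v_q < u\}$ and bound $\int(f-m)^2 \dmu \le 2\int(f-m)^2(1-\chi)^2 \dmu + 2\int_{U_1}(f-m)^2 \dmu$; split the first piece over $A_u$ and $A_u^c$. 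On $A_u^c$, $(f-m)^2(1-\chi)^2 \le (v_{q-1}/(u\,v_q))(f-m)^2(1-\chi)^2$, so this contribution is bounded by the above display divided by $qu$. On $A_u$, $(f-m)^2 \le \Osc(f)^2/4$, giving at most $\mu(A_u)\Osc(f)^2/4$. The interior term $\int_{U_1}(f-m)^2 \dmu$ is absorbed by the assumed local Poincar\'e inequality on $U_1$, after the standard trick of replacing $m$ by $\bar f_{U_1}$. Picking $u$ just below $u^\ast(s) = \inf\{u : \mu(A_u) > s\}$ forces $\mu(A_u) \le s$ and produces $\beta(s) = C/u^\ast(s)$.

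The delicate point is the bookkeeping in (1): the variable coefficient $v_{q-1}/v_q$ forces one to combine the Lyapunov-to-Poincar\'e recipe of Section \ref{secexit} with a truncation along its sub-level sets, and the residual local term $\int_{U_1}(f-m)^2 \dmu$ must be absorbed cleanly through the local Poincar\'e inequality without inflating the final constant $\beta(s)$. Conceptually, however, everything reduces to the identity \eqref{eqalmostlyap} and to the geometry of the sub-level sets of $v_{q-1}/v_q$.
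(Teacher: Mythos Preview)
Your proof is correct and matches the paper's approach in all three parts. The only cosmetic differences are that the paper takes $W=v_j/\kappa$ rather than $1+v_j$ in (2)--(3), and in (1) first packages the Lyapunov step plus local Poincar\'e into a single weighted inequality $\inf_a\int (v_{q-1}/v_q)(f-a)^2\,d\mu \le C\int\Gamma(f,f)\,d\mu$, then truncates along the level sets of $v_{q-1}/v_q$ using the centering $a_f=\mu(f\,v_{q-1}/v_q)/\mu(v_{q-1}/v_q)$ (so that $|f-a_f|\le\Osc(f)$ holds directly), which is marginally cleaner than your midrange-then-local-mean maneuver but identical in substance.
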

The first part of the theorem gives that in the reversible setting, finiteness of moments of return times implies a
weak Poincar\'e like inequality, a result that we are not aware of in discrete times. It is however very difficult to
 get precise estimates of $\beta$ as we need concentration properties of $\mu$ and sharp control of $v_q$ and $v_{q-1}$.
 Using that $v_{q-1}\le v_q^{q-1\over q}$ we may get a lower bound for $\beta$ using only $v_q$.\\
Note that the second part of the Proposition is not so surprising and corresponds to the similar
discrete situation of birth and death processes on the half line (see Proposition 7.10 in
\cite{Cat07}). The third part is only expressing that $v_q$ is a Lyapunov function.
\begin{proof}
The first part of the theorem, inspired by \cite{CGGR} and the proof of the main theorem, may be proved in two steps that
 we sketch here. First, using the Lyapunov conditions (\ref{eqalmostlyap}) and the same line of reasoning than (H1)
 implies (H4) in our main theorem, we get some weighted Poincar\'e inequality: for some constant $C$, we have
$$\inf_a\int \frac{v_{q-1}}{v_q}(f-a)^2d\mu\le C\,\int\Gamma(f,f).
$$
Then, with $a_f=\mu(f\frac{v_{q-1}}{v_q})/\mu(\frac{v_{q-1}}{v_q})$, for all bounded $f$ and for all $u>0$
\begin{eqnarray*}
\Var_\mu(f)&\le&\int (f-a_f)^2d\mu\\
&=&\int_{\frac{v_{q-1}}{v_q}\ge u}(f-a_f)^2d\mu+\int_{\frac{v_{q-1}}{v_q}< u}(f-a_f)^2d\mu\\
&\le& u^{-1} \inf_a\int \frac{v_{q-1}}{v_q}(f-a)^2d\mu +\mu\frac{v_{q-1}}{v_q}<u)\Osc(f)^2
\end{eqnarray*}
which gives the result.

For the second part just remark that $$Lv_1(x) \leq - \, \frac{v_1(x)}{\sup v_1} \quad \textrm{ for
} x \in \bar{U}^c \, .$$ Hence $v_1/\kappa$ is a bounded Lyapunov function satisfying (H1) (with
$\bar{U}^c$ replaced by $\{d(x,U)>1\}$) and we may apply Proposition \ref{ultra}.

For the third part we similarly have $$Lv_q(x) \leq \, - \, \frac{v_{q-1}(x)}{v_q(x)} \, v_q(x)
\quad \textrm{ for } x \textrm{ such that } d(x,U)\geq 1 \, .$$ Hence $v_q/\kappa$ is a Lyapunov
function satisfying (H1) (with $\bar{U}^c$ replaced by $\{d(x,U)>1\}$), and we may apply Theorem
\ref{thmmaindiff}.
\end{proof}
\medskip

An immediate generalization of (3) in the previous proposition is the following assumption : there
exists an increasing function $\varphi$ growing to infinity and $R>0$, such that
\begin{equation}\label{eqlyapfaible}
\varphi(v_q(x))  \, \leq  \, q \, v_{q-1}(x) \quad \textrm{ for } |x|\geq R \, .
\end{equation}
Indeed if \eqref{eqlyapfaible} holds, we have $$Lv_q(x) \leq -  \, \varphi(v_q(x))$$ for $|x|$
large enough, and $v_q$ is thus a $\varphi$-Lyapunov function in the terminology of \cite{DFG} and
\cite{CGGR} (see definition 2.2 in the latter reference).
\medskip

Conversely, is it possible to get the existence of $\phi$-moments starting from a functional
inequality ? The first answer to this question was given in \cite{Math} where some Nash type
inequalities are shown to imply the existence of moments. The proof uses the fact that the Laplace
transform of $T_U$, $h_t^U(x)=\E_x(e^{- t \, T_U})$ satisfies $Lh - th =0$ for all $t>0$.
\medskip

Using the results in section 3 of \cite{CatGui2} again we can derive similar (actually stronger)
results. Recall that
$$
\P_\mu(T_U>t)  \leq  \P_\mu \left(- \, \frac 1t \, \int_0^t \, \BBone_U(X_s) \, ds + \mu(U) \geq
\mu(U)\right) \, .
$$
According to Proposition 3.5 in \cite{CatGui2} we thus have for $t$ large enough, $$\P_\mu(T_U>t)
\leq C(k) \, t^{-k} \, (\mu(U))^{-2k}$$ provided the process is $\alpha$-mixing with a mixing rate
$\alpha(u) \leq C (1+u)^{-k}$ for some integer $k\geq 1$.
\medskip

The mixing rate is connected to the rate of convergence to equilibrium of the semi group, as
explained in \cite{CatGui2}. This rate of convergence can be bounded using either a Weak
Poincar\'e Inequality (see \cite{rw}) or a $\varphi$-Lyapunov function (see \cite{DFG,BCG}). Let
us collect these results in the next (and final) theorem

\begin{theorem}\label{thmpmoment}
Assume that $L$ is uniformly strongly hypoelliptic. Let $U$ be a bounded connected open set.
Assume in addition one of the following conditions,
\begin{enumerate}
\item \quad $\mu$ satisfies a weak Poincar\'e inequality, i.e. there exists a non-increasing
function $\beta$ such that for all $s>$ and all bounded and smooth $f$, $$\Var_\mu(f) \leq
\beta(s) \, \int \, \Gamma(f,f) \dmu + s \, \Osc^2(f)$$ where $\Osc(f)$ denotes the oscillation
of $f$; in which case the process is $\alpha$-mixing with a mixing rate $$\alpha(t) \leq
\left(\inf \{s>0 \, ; \, \beta(s) \, \log(1/s) \, \leq \, t/2\}\right)^2 \, .$$ \item \quad there
exists a $\varphi$-Lyapunov function $W$  for some smooth increasing concave function $\varphi$
with $\varphi' \to 0$ at infinity; in which case the process is $\alpha$-mixing with a mixing rate
$$\alpha(t) \leq C \, \left(\int W \dmu\right) \, \frac{1}{\varphi \circ H^{-1}_\varphi (t)} \, \,
 ,$$ where $H_\varphi(t)=\int_1^t (1/\varphi(s)) ds$ and we assume that $\int W \dmu < +\infty$.
\end{enumerate}
If in addition $\alpha(t) \leq C \, (1+t)^{- k}$ for some positive integer $k$, then
$$\P_\mu(T_U>t) \leq C(k) \, t^{-k} \, (\mu(U))^{-2k}$$ for some constant $C(k)$.

In particular for all $j<k$, $\E_\mu(T_U^j) < +\infty$. The same holds for $\mu$ almost all $x$,
and for all $x$, and $j<k/2$, $\E_x(T_U^j) < +\infty$.
\end{theorem}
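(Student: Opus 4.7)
The plan is to assemble three ingredients: mixing rates coming from each of the two assumed functional inequalities, a polynomial deviation bound for the empirical occupation time of $U$ derived from this rate, and a Cauchy--Schwarz step that transfers the stationary tail estimate to arbitrary starting points.

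First I would invoke the cited quantitative conversions from functional inequality to mixing. Under hypothesis (1) the weak Poincar\'e inequality yields the announced $\alpha$-mixing rate via the main result of \cite{rw}; under (2) the $\varphi$-Lyapunov estimate of \cite{DFG,BCG} produces a total-variation rate of convergence to $\mu$ of the form $1/\varphi\circ H_\varphi^{-1}(t)$, which, thanks to symmetry and the hypothesis $\int W\dmu<+\infty$, controls $\alpha$-mixing of the stationary process up to the multiplicative factor $\int W\dmu$.

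Next I would exploit the same inclusion used in the proof of (H4) $\Rightarrow$ (H2):
\[
\{T_U>t\}\subseteq\Bigl\{\mu(U)-\tfrac{1}{t}\int_0^t\BBone_U(X_s)\,ds\geq\mu(U)\Bigr\}.
\]
Under the polynomial mixing $\alpha(u)\leq C(1+u)^{-k}$, Proposition 3.5 of \cite{CatGui2} furnishes a bound of order $t^{-k}$ on the $2k$-th centered moment of the empirical mean, and a Markov inequality at order $2k$ produces the $\mu(U)^{-2k}$ prefactor, yielding
\[
\P_\mu(T_U>t)\leq C(k)\,t^{-k}\,\mu(U)^{-2k}.
\]
The $\mu$-moments then follow at once from $\E_\mu[T_U^j]=j\int_0^{+\infty}t^{j-1}\P_\mu(T_U>t)\,dt$, which is finite for every $j<k$.

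To upgrade to pointwise control, I would reuse the density argument from the proof of (H4) $\Rightarrow$ (H2): uniform strong hypoellipticity together with symmetry and Chapman--Kolmogorov gives a density $r(1,x,\cdot)\in\L^2(\mu)$ for the law of $X_1$ under $\P_x$. The Markov property and Cauchy--Schwarz then give, for $t\geq 1$,
\[
\P_x(T_U>t)\leq\int r(1,x,y)\,\P_y(T_U>t-1)\,\mu(dy)\leq\nrm{r(1,x,\cdot)}_{\L^2(\mu)}\bigl(\P_\mu(T_U>t-1)\bigr)^{1/2},
\]
where I used $\P_y(T_U>t-1)^2\leq\P_y(T_U>t-1)$. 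This yields $\P_x(T_U>t)\leq C(x,k)\,t^{-k/2}$ and hence $\E_x[T_U^j]<+\infty$ for every $j<k/2$. The halving of the exponent is the main obstacle to a sharper pointwise conclusion; it is the intrinsic cost of this soft $\L^2$ transfer and seems unavoidable without a stronger (e.g.\ ultracontractive) bound on $r(1,x,\cdot)$ or a more refined coupling argument.
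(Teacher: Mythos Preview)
Your proposal is correct and follows essentially the same route as the paper: the inclusion $\{T_U>t\}\subseteq\{\mu(U)-\tfrac1t\int_0^t\BBone_U(X_s)\,ds\geq\mu(U)\}$ combined with Proposition~3.5 of \cite{CatGui2} under polynomial mixing, with the mixing rates imported from \cite{rw} and \cite{DFG,BCG}, and the pointwise transfer via the $\L^2$ density $r(1,x,\cdot)$ and Cauchy--Schwarz exactly as in the proof of (H4)~$\Rightarrow$~(H2). The halving $j<k/2$ for arbitrary $x$ is indeed the cost of this $\L^2$ step, and the paper accepts it without further comment.
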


The interested reader will find in \cite{BCG,CGGR} in particular many examples (including the so
called $\kappa$-concave measures) of measures satisfying one (or both) of the previous conditions.

\bigskip

\bibliographystyle{plain}

\def\cprime{$'$}

\end{document}